\newtheorem{theorem}{Theorem}[section]
\newtheorem{lemma}[theorem]{Lemma}
\newtheorem{proposition}[theorem]{Proposition}
\newcommand{\R}{\mathbb{R}}
\newcommand{\G}{\mathcal{G}}
\begin{document}
\title[Outer billiards in the space of oriented geodesics]{Outer billiards
in the spaces of oriented geodesics of the three-dimensional space forms}
\author{Yamile Godoy, Michael Harrison, and Marcos Salvai}
\thanks{This work was supported by Consejo Nacional de Investigaciones Cient%
\'{\i}ficas y T\'ecnicas and Secretar\'{\i}a de Ciencia y T\'ecnica de la
Universidad Nacional de C\'ordoba. We are very grateful to the referee for 
the careful reading of the paper and the detailed suggestions. 
The second author thanks the
hospitality of {\sc ciem-famaf} during his visit.}
\maketitle

\begin{abstract}
Let $M_{\kappa }$ be the three-dimensional space form of constant curvature $%
\kappa =0,1,-1$, that is, Euclidean space $\mathbb{R}^{3}$, the sphere $S^{3}
$, or hyperbolic space $H^{3}$. Let $S$ be a smooth, closed, strictly convex
surface in $M_{\kappa }$. We define an outer billiard map $B$ on the four
dimensional space $\mathcal{G}_{\kappa }$ of oriented complete geodesics of $%
M_{\kappa }$, for which the billiard table is the subset of $\mathcal{G}%
_{\kappa }$ consisting of all oriented geodesics not intersecting $S$. We
show that $B$ is a diffeomorphism when $S$ is quadratically convex.

For $\kappa =1,-1$, $\mathcal{G}_{\kappa }$ has a K\"{a}hler structure
associated with the Killing form of $\operatorname{Iso}(M_{\kappa })$. We prove that 
$B$ is a symplectomorphism with respect to its fundamental form and that $B$
can be obtained as an analogue to the construction of Tabachnikov of the
outer billiard in $\mathbb{R}^{2n}$ defined in terms of the standard
symplectic structure. We show that $B$ does not preserve the fundamental
symplectic form on $\mathcal{G}_{\kappa }$ associated with the cross product
on $M_{\kappa }$, for $\kappa =0,1,-1$.

We initiate the dynamical study of this outer billiard in the hyperbolic case by introducing and
discussing a notion of holonomy for periodic points.
\end{abstract}

\noindent Key words and phrases: outer billiards, space of oriented
geodesics, symplectomorphism, Jacobi field, holonomy

\medskip

\noindent Mathematics Subject Classification 2020. Primary: 37C83, 53C29,
53D22. Secondary: 53A35, 53C22, 53C35.

\section{Introduction}

\subsection{Motivation\label{sec:motivation}}

The \emph{dual} or \emph{outer billiard map} $B$ is defined in the plane as
a counterpart to the usual inner billiards. Let $\gamma \subset \mathbb{R}%
^{2}$ be a smooth, closed, strictly convex curve, and let $p$ be a point
outside of $\gamma $. There are two tangent lines to $\gamma $ through $p$;
choose one of them consistently, say, the right one from the viewpoint of $p$%
, and define $B(p)$ as the reflection of $p$ in the point of tangency (see
Figure \ref{fig:outerplane}).

\begin{figure}[ht!]
\centerline{
\includegraphics[width=2in]{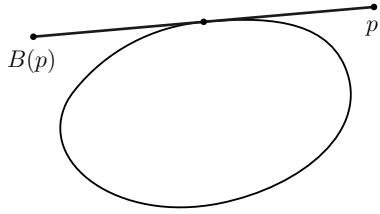}
}
\caption{The outer billiard map in the plane}
\label{fig:outerplane}
\end{figure}

The study of the dual billiard was originally popularized by Moser \cite
{moser1, moser2}, who considered the dual billiard map as a crude model for
planetary motion and showed that orbits of the map cannot escape to
infinity. The outer billiard map has since been studied in a number of
settings; see \cite{TabachMathIntell, TabachnikovOuter,
TabachnikovBilliards, bookT} for surveys.

In \cite{Tabachnikov}, Tabachnikov generalized planar outer billiards to
even-dimensional standard symplectic space $(\mathbb{R}^{2n},\omega )$ as
follows: given a smooth, closed hypersurface $M$ in $\mathbb{R}^{2n}$ which
is quadratically convex (that is, the shape operator at any point of $M$ is
definite), the restriction of $\omega $ to each tangent space $T_{q}M$ has a 
$1$-dimensional kernel, called the \emph{characteristic line}. If $\nu $ is
the outward-pointing unit normal vector field on $M$ and $\mathbb{R}^{2n}$
is identified with $\mathbb{C}^{n}$, then $\left\{ q-ti\nu (q)\mid t\in 
\mathbb{R}\right\} $ is the characteristic line at $q\in M$. Tabachnikov
showed that the collection of (geodesic) rays $\left\{ q-ti\nu (q)\mid
t>0\right\} $, indexed by $q\in M$, foliate the exterior $U$ of $M$; in
particular, for each $p\in U$ there exists a unique such ray passing through 
$p$. This gives a smooth outer billiard map taking $p$ to its reflection in
the corresponding tangency point: 
\begin{equation}
B:U\rightarrow U\text{,\ \ \ \ \ \ }q-ti\nu (q)\mapsto q+ti\nu (q)\text{.}
\label{BTabach}
\end{equation}

Tabachnikov proved that the map is a symplectomorphism of the exterior of $M$%
, and in \cite{TabachnikovPeriodic}, he showed that the number of $3$%
-periodic trajectories of the outer billiard map in $\mathbb{R}^{2n}$ is not
less than $2n$.

\subsection{Spaces of geodesics of space forms}

\label{sec:gintro} The goal of the present article is to define and study
outer billiards in another setting: on the space of oriented geodesics of
the three-dimensional space form $M_{\kappa }$ of constant curvature $%
\kappa= 0, 1, -1$, that is, Euclidean space $\mathbb{R}^{3}$, the sphere $%
S^3 $, or hyperbolic space $H^{3}$.

The \emph{space of oriented geodesics} $\mathcal{G}_{\kappa }$ is a
four-dimensional manifold whose elements are the oriented trajectories of
complete geodesics in $M_{\kappa }$. Elements of $\mathcal{G}_{\kappa }$ can
also be described as equivalence classes of unit speed geodesics, where $%
\gamma \sim \sigma $ if $\sigma \left( t\right) =\gamma \left(
t+t_{o}\right) $ for some $t_{o}\in \mathbb{R}$. When $\kappa=0, -1$, $%
\mathcal{G}_{\kappa }$ is the space of oriented lines in Euclidean or
hyperbolic space, which is diffeomorphic to $TS^{2}$, and $\mathcal{G}_{1}$
is the space of oriented great circles of $S^{3}$ (or equivalently, the
Grassmannian of oriented planes in $\mathbb{R}^4$), which is diffeomorphic
to $S^{2}\times S^{2}$ (see \cite{GW} or \cite{FM}). Historically, the space
of oriented geodesics is at the core of symplectic geometry through its
relationship with optics. This space possesses a rich geometry 
(for instance, it admits two natural K\"ahler structures), whose study 
began with Hitchin \cite{ Hitchin} and continued with \cite{gk1, salvaimm,
	Salvai2, GG, Alek, Anciaux}. 
It has been useful, for instance, in the
characterization of geodesic foliations \cite{salvaiolf, GShip,
GodoySalvaiCali, HarrisonMZ, HarrisonBLMS, HarrisonAGT, HarrisonT}.

\subsection{The definition of the outer billiard map on $\mathcal{G}_{%
\protect\kappa }$}

\label{sec:obdef}

Let $S$ be a smooth, closed, strictly convex surface in $M_{\kappa }$, that
is, for each $p\in S$, the complete totally geodesic surface tangent to $p$
intersects $S$ only at $p$, near $p$. By smooth we understand of class $\mathcal C^\infty$. 
We denote by $ \mathcal{M}$ the three-dimensional
space of geodesics which are tangent to~$S$, which  
can be naturally identified with $T^{1}S$, the unit tangent bundle of $S$ (we were inspired by \cite{brendan}).

We first consider the cases $\kappa =0,-1$. We define the \emph{billiard table} 
\begin{equation*}
\mathcal{U}=\mathcal{G}_{\kappa }-\left\{ \text{oriented geodesics
intersecting }S\right\} \text{;}
\end{equation*}%
this is an open submanifold of $\mathcal{G}_{\kappa }$ with boundary equal to $%
\mathcal{M}$. 

We say that two distinct
geodesics $\ell $ and $\ell ^{\prime }$ in $\mathcal{U}$ are in the \emph{%
outer billiard correspondence} if there exists a complete totally geodesic
surface $P$, tangent to $S$ at a point $p$, containing $\ell $ and $\ell
^{\prime }$, such that $\ell ^{\prime }$ can be obtained by parallel
translating $\ell $ along the shortest geodesic from $\ell $ to $p$, twice
the distance from $\ell $ to $p$; see Figure \ref{fig:outereuclidean}.

\begin{figure}[ht]
\centerline{
		\includegraphics[width=2.2in]
		{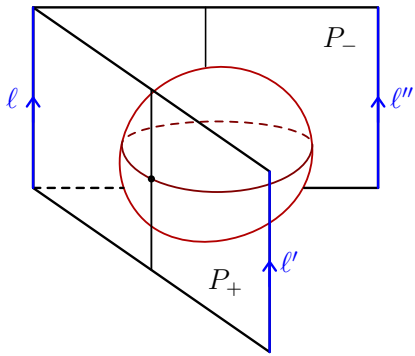}}
\caption{The geodesic $\ell$ is in the outer billiard correspondence with $%
\ell^{\prime }$ and $\ell^{\prime \prime }$}
\label{fig:outereuclidean}
\end{figure}

Given $\ell \in \mathcal{U}$, there exist exactly two complete totally
geodesic surfaces containing $\ell $ and tangent to $S$ 
(the assertion is clear for $\kappa=0$ and for $\kappa=-1$, for instance, 
using the Klein ball model of hyperbolic space, see Section \ref{Dynamics}). 
So $\ell $ is in
correspondence with exactly two other elements of $\mathcal{U}$. To define
the outer billiard map, we use the orientation of $\ell $ to choose the
surface on the right, say $P_{+}$, as follows.

Let $p_{\pm }$ be the point of tangency of the surface $S$ with $P_{\pm }$ and let $q_{\pm }$
be the point on $\ell $ realizing the distance $d_{\pm }$ to $p_{\pm }$ (see
Figure \ref{fig:outer}, left).

Let $\gamma _{\pm }$ be the geodesic ray joining $q_{\pm }$ with $p_{\pm }$,
with $\gamma _{\pm }(0)=q_{\pm }$ and $\gamma _{\pm }(d_{\pm })=p_{\pm }$.
Let $\alpha$ be a unit speed geodesic such that $\ell =[\alpha ]$ and define $t_{\pm }$ by $\alpha \left( t_{\pm
}\right) =q_{\pm }$ (see Figure \ref{fig:outer}, right). Let $W_{\pm }$ be
the parallel vector field along $\alpha $ with $W_{\pm }(t_{\pm })=\gamma
_{\pm }^{\prime }(0)$. Now choose the sign $+$ so that $\left\{
W_{+},W_{-},\alpha ^{\prime }\right\} $ is a positively oriented frame along 
$\alpha $. The \emph{outer billiard map} $B:\mathcal{U}\rightarrow \mathcal{U%
}$ can now be defined:
\begin{equation*}B(\ell ) \text{ is the oriented line obtained by parallel
translating $\ell $ along $\gamma _{+}$ between $0$ and $2d_{+}$}. 
\end{equation*}
Moreover, $B$ is a bijection.

\begin{figure}[h!t]
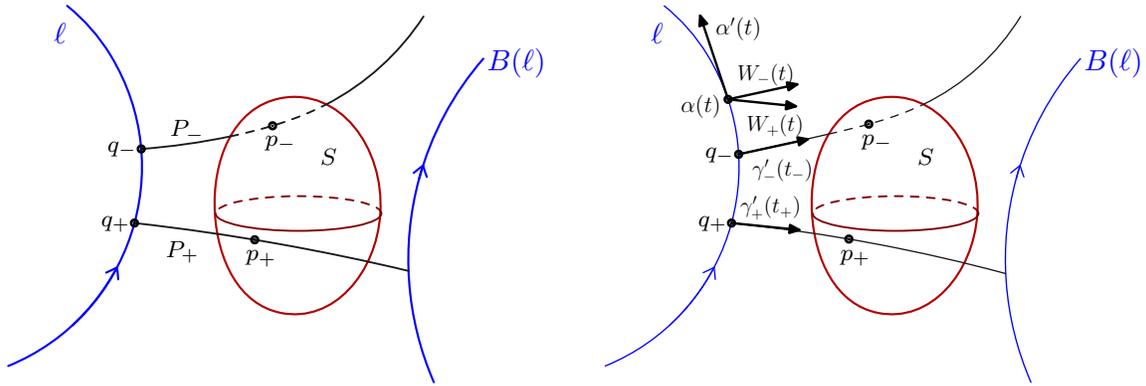

\centerline{
\includegraphics[height=2in]{newestouterhyp.mps} \hspace{.2in}
\includegraphics[height=2in]{newestouterhypframe.mps}
}
\caption{The outer billiard map on $\mathcal{L}_{\protect\kappa }$
associated with $S$}
\label{fig:outer}
\end{figure}

Notice that in the hyperbolic case (in contrast with the Euclidean),
parallel translating a line $\ell $ a distance $d$ along unit speed geodesic
rays $\gamma _{1}$ and $\gamma _{2}$ orthogonal to $\ell $ depends on the
initial points $\gamma _{1}(0)$ and $\gamma _{2}(0)$ in $\ell =\left[ \alpha %
\right] $, even if $\gamma _{2}^{\prime }(0)$ is the parallel transport of $%
\gamma _{1}^{\prime }(0)$ along $\alpha $.

Now we consider the case $\kappa =1$. We will see that a similar definition
of outer billiard map can be given. For an oriented great circle $c$ not intersecting $S$, 
there exist
exactly two great spheres of $S^{3}$ containing $c$ and tangent to $S$, but
there may be more than one (actually, a circle worth of them) shortest
geodesics between $c$ and the tangency point in $S$, so that $q_{+}$ or $%
q_{-} $ are not well defined. That is the case when the distance from $c$ to
the tangency point in $S$ is $\pi /2$. We call $\mathcal{C}$ the set of
these oriented great circles. We will describe this set later, in Subsection \ref{sphericalC}, in terms of
the Gauss map of $S$, $q\mapsto T_{q}S$, using the canonical identification
of oriented great circles with oriented planes through the origin in $%
\mathbb{R}^{4}$. Although the outer billiard map $B$ will be still
well-defined on $\mathcal{C}$ as the involution $c\mapsto -c$ (the same
circle with opposite orientation), it is easier to exclude $\mathcal{C}$
from the domain of definition. Thus, in the spherical case we define%
\begin{equation}
\mathcal{U}=\left\{ c\in \mathcal{G}_{1}\mid c\text{ does not intersect }%
S\right\} -\mathcal{C}\text{.}  \label{UmenosC}
\end{equation}

\begin{proposition}
	\label{esfera}Let $S$ be a strictly convex closed surface in $S^{3}$. The
	analogue of the outer billiard map in the cases $\kappa =0,-1$ is well defined on $%
	\mathcal{U}$ for $\kappa = 1$ and is a bijection onto this set.
\end{proposition}

For $\kappa =0,1,-1$, we call $B:\mathcal{U}\rightarrow \mathcal{U}$ as
above the \emph{outer billiard map on $\mathcal{G}_{\kappa }$ associated
with $S$.} We will show that $B$ is a diffeomorphism under the stronger
condition that $S$ is quadratically convex (in particular, strictly convex).

\begin{theorem}
\label{thm:diffeo}Let $S$ be a smooth, closed, quadratically convex surface
in the space form~$M_{\kappa }$. The outer billiard map $B:\mathcal{U}%
\rightarrow \mathcal{U}$ associated with $S$ is a diffeomorphism.
\end{theorem}

The following proposition shows that strict convexity is not enough for the smoothness of the billiard map. We present the example just for the sake of completeness, since it is  essentially the known corresponding fact for plane outer billiards.

\begin{proposition}\label{notSmooth}
	Let $S$ be a closed strictly convex surface in $\mathbb{R}^{3}$ which is
	invariant by the reflection with respect to the plane $y=0$ and contains the
	graph of the function $\varphi :\left( -2,2\right) \times \left( -1,1\right)
	\rightarrow \mathbb{R}$ defined by $\varphi \left( x,y\right) =f\left(
	x\right) +y^{2}$, where $f$ is a smooth function satisfying $f\left( 0\right) =f^{\prime }\left( 0\right)
	=f^{\prime \prime }\left( 0\right) =0$. Then the associated outer billiard
	map $B$ on $\mathcal{G}_{0}$ is not smooth.
\end{proposition}

\subsection{K\"{a}hler structures and the analogue of Tabachnikov's construction 
}

The space of oriented geodesics $\mathcal{G}_{\kappa }$ has one or two
canonical K\"{a}hler structures (for $\kappa =0$ or $\kappa =1,-1$,
respectively), so the natural question arises, whether Tabachnikov's
construction (\ref{BTabach}) of the outer billiard map for $\mathbb{R}%
^{2n} $ can be mimicked.

In order to deal with this issue, next we briefly introduce K\"{a}hler
structures on $\mathcal{G}_{\kappa }$, postponing formal definitions until
Section \ref{sec:prelim}.

Given $\ell \in \mathcal{G}_{\kappa }$, the $\pi /2$-rotation in $M_{\kappa
} $ which fixes $\ell $ induces a map on $\mathcal{G}_{\kappa }$ whose
differential at $\ell $ is a linear operator $\mathcal{J}_{\ell }$ on $%
T_{\ell }\mathcal{G}_{\kappa }$ which squares to $-$id. It may be
visualized by its action on four geodesic variations of $\ell $. The
geodesic $\ell $ may be translated in the two directions orthogonal to $\ell 
$ or rotated in two planes containing $\ell $, and the operator $\mathcal{J}%
_{\ell }$ sends translations to translations and rotations to rotations; see
Figure \ref{fig:J}. The collection of these linear transformations $\mathcal{%
J}_{\ell }$ is a complex structure $\mathcal{J}$ on $\mathcal{G}_{\kappa}$.

\begin{figure}[h!t]
\centerline{
		\includegraphics[width=4in]
		{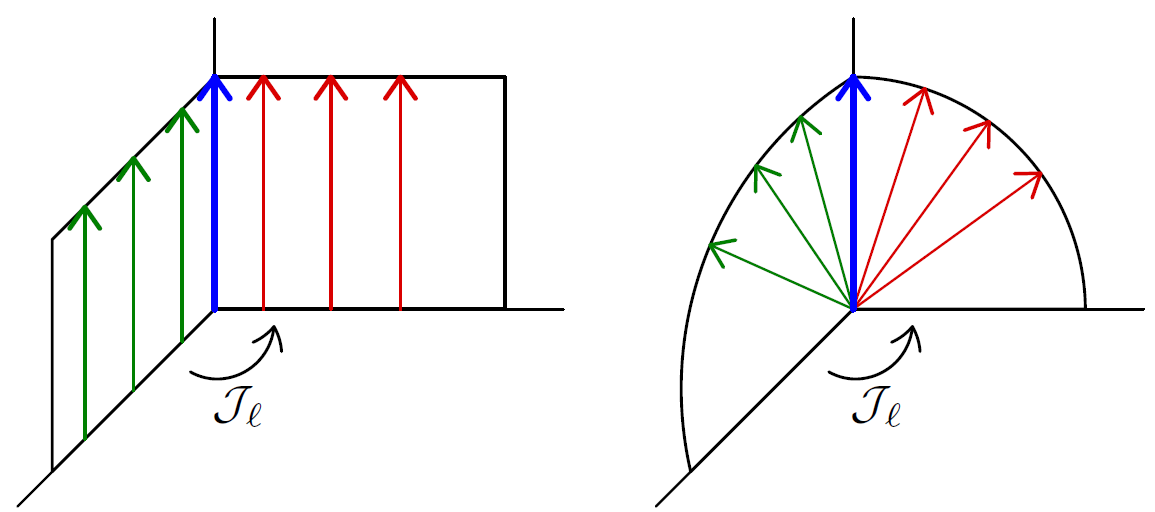}}
\caption{The linear transformation $\mathcal{J}_\ell$ maps the green
variation of geodesics to the red variation of geodesics}
\label{fig:J}
\end{figure}

For $\kappa =0,1,-1$, the manifold $\mathcal{G}_{\kappa }$ has a
pseudo-Riemannian metric $g_{\times }$ induced by the cross product on $%
M_{\kappa }$, and $(g_{\times },\mathcal{J})$ is a K\"{a}hler structure on $%
\mathcal{G}_{\kappa }$. For $\kappa = 1,-1$, there is an additional K\"{a}%
hler structure $(g_{K},\mathcal{J})$ on $\mathcal{G}_{\kappa }$, where $%
g_{K} $ is induced by the Killing form on Iso$(M_{\kappa })$. In the
Euclidean case, the Killing form $g_{K}$ degenerates. For the formal
definitions of $\mathcal{J}$, $g_{K}$ and $g_{\times }$ we use the language
of Jacobi fields; see Section \ref{sec:prelim}.

Having presented the K\"{a}hler structures on $\mathcal{G}_{\kappa }$, 
we can give a positive answer to the question in the beginning of the
subsection for $\kappa = 1,-1 $, using the K\"{a}hler structure $(g_{K},%
\mathcal{J})$. In this formulation we see that $B$ is a direct analogue of
the outer billiard map on $(\mathbb{R}^{2n},\omega )$ described in Section %
\ref{sec:motivation}.

Suppose that $S$ is a smooth, closed, strictly convex surface in $M_{\kappa
} $, and let $\mathcal{U}$ and $\mathcal{M}$ be as above. For $\ell \in 
\mathcal{M}$ let $\nu (\ell )$ be the outward-pointing unit normal
vector to $\mathcal{M}$ at $\ell $ (outward-pointing means pointing to $\mathcal{U}$). 
Given $\xi \in T_{\ell }\mathcal{M}$,
let $\Gamma _{\xi }$ denote the geodesic in $(\mathcal{G}_{\kappa },g_{K})$
with initial velocity $\xi $. Each geodesic $t\mapsto \Gamma _{\mathcal{J}%
\nu (\ell )}(t)$ traces out a totally geodesic surface in $M_{\kappa }$ that
is tangent to $S$. We will show that the collection of geodesic rays $%
\left\{ \Gamma _{\mathcal{J}\nu (\ell )}(t)\mid t<0\right\} $ indexed by $%
\ell \in \mathcal{M}$ foliates the exterior $\mathcal{U}$ of $\mathcal{M}$.
In particular, for each geodesic $\ell ^{\prime }\in \mathcal{U}$, there
exists a unique such ray passing through $\ell ^{\prime }$. This induces an
outer billiard map $B^{\prime }:\mathcal{U}\rightarrow \mathcal{U}$ taking $%
\ell ^{\prime }$ to its \textquotedblleft reflection\textquotedblright\ in
the tangent geodesic $\ell $: 
\begin{equation*}
B^{\prime }(\Gamma _{\mathcal{J}\nu (\ell )}(-t))=\Gamma _{\mathcal{J}\nu
(\ell )}(t)
\end{equation*}%
for $t>0$. In Section \ref{sec:kahlerdef}, after proving that $B^{\prime }$
is well defined, we will show that the outer billiard maps $B$ and $%
B^{\prime }$ coincide.

\begin{theorem}
\label{thm:equiv} For $\kappa = 1,-1$, let $S$ be a smooth, closed,
quadratically convex surface in $M_{\kappa }$, and consider $\mathcal{G}%
_{\kappa }$ endowed with the K\"{a}hler structure $(g_{K},\mathcal{J})$. The
map $B^{\prime }:\mathcal{U}\rightarrow \mathcal{U}$ coincides with the
outer billiard map $B$ on $\mathcal{U}$ associated with $S$.
\end{theorem}

The following proposition reveals that the K\"{a}hler structure $\left(
g_{\times },\mathcal{J}\right) $ is not appropriate in our setting, since it
does not give rise to a billiard map as in Tabachnikov's construction.

\begin{proposition}\label{gCross}
	For $\kappa =0,1,-1$, consider on $\mathcal{G}_{\kappa }$ the K%
	\"{a}hler structure $\left( g_{\times },\mathcal{J}\right) $. Let $S$ be as
	in the preceding theorem and let $\ell \in \mathcal{M}$. Then the metric $%
	g_{\times }$ degenerates on $T_{\ell }\mathcal{M}$, and for each vector $N$
	normal to $T_{\ell }\mathcal{M}$, the image of the geodesic in $\mathcal{G}%
	_{\kappa }$ with initial velocity $\mathcal{J}\left( N\right) $ is disjoint
	from $\mathcal{U}$.
\end{proposition}

\subsection{The symplectic
properties of the outer billiard map}

The outer billiard map interacts with the symplectic structures on $\mathcal{%
G}_\kappa$ as follows.

\begin{theorem}
\label{thm:symp} Let $B$ be the outer billiard map associated with a smooth,
closed, quadratically convex surface in $M_\kappa$.

\smallskip

a) For $\kappa =1,-1$, $B$ is a symplectomorphism with respect to the
fundamental symplectic form $\omega _{K}$ of $(\mathcal{G}_{\kappa },g_{K},%
\mathcal{J})$.

\smallskip

b) For $\kappa =0,1,-1$, $B$ does not preserve the fundamental symplectic
form $\omega _{\times }$ of $(\mathcal{G}_{\kappa },g_{\times },\mathcal{J})$%
.
\end{theorem}

Additional context for Theorem \ref{thm:symp} is given in Proposition \ref%
{prop:kx}, which establishes a relationship with plane hyperbolic outer
billiards (see \cite{planeHOB}) and supports the fact that $\omega _{K}$ (in
contrast with $\omega _{\times }$) is the natural symplectic form in our
context.

In the Euclidean case, the outer billiard map $B$ preserves parallelism,
yielding an $S^{2}$-worth of planar outer billiards as in Figure \ref%
{fig:outerplane}. That is, given a fixed direction $v$ in the  two-sphere, the
orthogonal projection of $S$ onto any plane $P$ orthogonal to $v$ determines
a smooth closed convex curve $\gamma $ in $P$ (the \emph{shadow} of $S$ with
respect to $v$). These shadows vary smoothly with respect to $v$, and the
outer billiard map $B$, restricted to lines with direction $v$, is
equivalent to the planar outer billiard in $P$ with respect to $\gamma $. In
particular, each such restriction is area-preserving.

Recall that for $%
\kappa =0$ the Killing form $g_{K}$ degenerates and so it does not induce a symplectic form on $\mathcal{G}_0$.
However, we have a
weaker structure, a Poisson bivector field $\mathcal{P}$, also compatible with $\mathcal{J}$, which we
define in Section \ref{sec:kahler}. The proof of the next proposition is immediate from the preceding paragraph. 


\begin{proposition}
\label{Poisson}The outer billiard map associated with a smooth, closed,
quadratically convex surface in $\mathbb{R}^{3}$ preserves both the
canonical Poisson structure $\mathcal{P}$ on $\mathcal{G}_{0}$ and its
symplectic leaves, which are the submanifolds of parallel lines. In
particular, the restriction to each such submanifold is a symplectomorphism.
\end{proposition}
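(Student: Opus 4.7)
\textbf{Proof proposal for Proposition \ref{Poisson}.} My plan is to exploit the fact that parallel transport in $\mathbb{R}^3$ is just rigid translation, so that the billiard map $B$ preserves parallelism of lines, and then reduce the statement to the classical area-preserving property of planar outer billiards.

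First I would read off from Theorem~\ref{Theorem1} (with $\kappa=0$) that $B$ preserves the direction of every line: the parallel transport $u_t$ of $u\in T^1 S$ along the straight line $\gamma_{iu}$ is simply the translate of $u$ to a new footpoint, so the underlying vector in $\mathbb{R}^3$ is unchanged and $B(\ell)$ is parallel to $\ell$ for all $\ell\in\mathcal{U}$. Consequently $B$ preserves the foliation whose leaves
\[
\mathcal{L}_v=\{\ell\in\mathcal{G}_0:\ell\text{ has direction }v\}\cong v^{\perp},\qquad v\in S^2,
\]
are the affine planes of oriented lines sharing a common direction.

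Next, under the natural diffeomorphism $\mathcal{L}_v\cong v^{\perp}$ sending an oriented line to its unique point of intersection with the plane through the origin orthogonal to $v$, I would check that $B|_{\mathcal{L}_v\cap\mathcal{U}}$ coincides with the ordinary planar outer billiard on $v^{\perp}$ associated with the shadow $\pi_v(S)\subset v^{\perp}$, where $\pi_v:\mathbb{R}^3\to v^{\perp}$ denotes orthogonal projection. This is direct from the definition of $B$: the two support planes $P_{\pm}$ of $S$ containing $\ell$ project under $\pi_v$ to the two support lines from $\pi_v(\ell)$ to the strictly convex planar region $\pi_v(S)$; the tangency points $p_\pm\in S$ project to the corresponding tangency points of the shadow curve; and the $180^{\circ}$ reflection of $\ell$ about $p_\pm$ within $P_\pm$ descends to the planar $180^{\circ}$ reflection about the projected tangency point.

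Finally, using the description of $\mathcal{P}$ arising from the degenerate K\"{a}hler pair $(g_K,\mathcal{J})$ developed in Subsection~\ref{Kahler}, I would verify that the characteristic distribution of $\mathcal{P}$ at each $\ell$ coincides with $T\mathcal{L}_v$ and that $\mathcal{P}|_{\mathcal{L}_v}$ is a nonzero constant multiple of the Euclidean area form on $v^{\perp}$. Combined with the classical fact that the planar outer billiard preserves area (see, e.g., \cite{TabachnikovOuter, bookT}), this gives that $B$ preserves each symplectic leaf of $\mathcal{P}$ and restricts there to a symplectomorphism, and hence preserves $\mathcal{P}$ globally. The main obstacle is precisely this last identification, which requires unpacking the Killing-form K\"{a}hler structure on $\mathcal{G}_0\cong TS^2$ and computing it explicitly on the vertical fibers; the remainder of the argument is essentially a geometric reduction to the known planar theorem.
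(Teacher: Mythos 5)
Your proposal is correct and follows exactly the route the paper intends: the authors explicitly omit the proof, stating that it is ``straightforward from the well-known fact that the planar outer billiard map preserves the area,'' after having observed in the introduction that $B$ preserves parallelism and restricts on each family of parallel lines to the planar outer billiard associated with the corresponding shadow of $S$. Your additional step of identifying the characteristic distribution of $\mathcal{P}$ with the tangent spaces to the leaves of parallel lines (via parallel Jacobi fields) and $\mathcal{P}$ restricted to a leaf with the Euclidean area form is precisely the unpacking the paper leaves implicit.
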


\subsection{Dynamical properties of the outer billiard $B$}

To begin the study of dynamical properties of the outer billiard map $B$, we
first observe that in the Euclidean case parallelism has consequences for
periodic orbits of the outer billiard map $B$ associated with the surface $S$%
. Given $v\in S^{2}$, consider the planar outer billiard system in any plane 
$P$ orthogonal to $v$, played outside the shadow of $S$ with respect to $v$.
For this planar system, there exist at least two distinct $n$-periodic
trajectories with rotation number $r$, for every $n\geq 3$ and positive $%
r\leq \left[ (n-1)/2\right] $ coprime with $n$ (see \cite[Theorem 6.2]{bookT}). Each such periodic orbit lifts to a
periodic orbit of the outer billiard map $B$ associated with $S$. In
particular, for each direction $v\in S^{2}$, there exist $n$-periodic orbits
consisting of geodesics with direction $v$.

The hyperbolic case is more interesting than the Euclidean one, because
unlike in $\mathbb{R}^{3}$, the lines $\ell $, $B(\ell )$, $B^{2}\left( \ell
\right) ,\dots $ in $H^{3}$ are (in general) not parallel, in the sense that
they are not orthogonal to a fixed totally geodesic surface. Indeed, if a
line $\ell $ in $H^{3}$ is parallel transported along a line $\ell ^{\prime }$
orthogonal to $\ell $, then $\ell ^{\prime }$ is the unique line preserved
by the one-parameter group of transvections along $\ell ^{\prime }$ 
(that is, isometries of $H^3$ translating $\ell ^{\prime }$ whose differentials realize the parallel transport along it). This nonparallel phenomenon is illustrated more explicitly
in the following proposition.

\begin{proposition}
\label{NotParallel} Given $\theta \in (0,\pi/2)$, there exist a
quadratically convex closed surface $S$ in $H^3$ and an oriented line $\ell$
not intersecting $S$ such that $\ell$ and $B^{3}(\ell)$ intersect at a point
forming the angle $\theta$.
\end{proposition}

Similarly, one can show the existence of a surface $S$ and $\ell $ such that 
$B^{3}(\ell )$ is different from $\ell $ and asymptotic to it.

We next introduce a notion of holonomy for periodic orbits of the outer
billiard map in hyperbolic space. Let $\bar{\ell}=(\ell _{0},\dots ,\ell
_{n-1})$ be an $n$-periodic orbit of $B$; in particular, we assume the $\ell
_{i}$ are distinct. For $0\leq k<n$ let $d_{k}$ be the signed distance
between the points $q_{+}(\ell _{k})$ and $q_{-}(\ell _{k})$, which were
defined in Section \ref{sec:obdef}, that is, if $\ell _{k}=\left[ \gamma _{k}%
\right] $ with $\gamma _{k}(0)=q_{-}$, then $\gamma _{k}(d_{k})=q_{+}$. Then
the \emph{holonomy of the periodic orbit} $\bar{\ell}$ is the number $%
d=\sum_{k=0}^{n-1}d_{k}$.

The definition also makes sense in Euclidean space, but every periodic orbit
has zero holonomy. We exhibit a periodic orbit in hyperbolic space with
nonzero holonomy.

\begin{proposition}
\label{holonomy}There exist a quadratically convex closed surface $S$ in $%
H^{3}$ and an oriented line $\ell$ not intersecting $S$ which is a periodic
point of the associated outer billiard map and whose holonomy is not zero.
\end{proposition}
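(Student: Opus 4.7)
The plan is to construct the example by exploiting a rotational symmetry of order $3$. Fix a geodesic $\alpha$ in $H^{3}$ and let $R \in \operatorname{Iso}(H^{3})$ denote rotation by $2\pi/3$ about $\alpha$. Since the outer billiard map $B$ associated with a surface $S$ is defined using only isometric data, whenever $R$ preserves $S$ the induced action $R_{*}$ on $\mathcal{L}_{-1}$ commutes with $B$. Consequently, any line $\ell_{0}$ not on $\alpha$ satisfying $B(\ell_{0}) = R_{*}(\ell_{0})$ is automatically a $3$-periodic point, and by the same equivariance $q_{\pm}(\ell_{k}) = R_{*}^{k}\, q_{\pm}(\ell_{0})$ and $d_{k} = d_{0}$ for each $k$, so the holonomy equals $3 d_{0}$. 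The goal is therefore to find an $R$-invariant quadratically convex surface $S$ and an $\ell_{0}$ fulfilling these conditions with $d_{0} \neq 0$.

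To produce such a pair, I would first consider an unperturbed geodesic sphere $S_{0}$ centered at a point $c \in \alpha$, whose full $\operatorname{SO}(3)$ symmetry makes the billiard integrable. On each level set of lines at fixed distance $r$ from $c$, $B$ acts as a fixed geometric rotation whose angle $\theta(r)$ varies continuously with $r$; for the value $r_{*}$ at which $\theta(r_{*}) = 2\pi/3$, one obtains an $R$-symmetric $3$-periodic orbit lying in the plane $\Pi$ through $c$ perpendicular to $\alpha$. For this orbit the extra reflective symmetry of $S_{0}$ across $\Pi$ interchanges $p_{+}$ with $p_{-}$ and hence forces their orthogonal projections onto $\ell_{0} \subset \Pi$ to coincide, so $d_{0} = 0$, in parallel with the always-zero-holonomy in the Euclidean case. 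Next deform $S_{0}$ through a smooth $1$-parameter family $\{S_{\epsilon}\}$ of $R$-invariant quadratically convex compact surfaces that break this $\Pi$-reflection symmetry, for instance $S_{0}$ plus an outward normal displacement $\epsilon \sin(2\theta)\cos(3\phi)$ in spherical coordinates $(\theta, \phi)$ around $c$, which is $R$-invariant (period $2\pi/3$ in $\phi$) but changes sign under $\theta \mapsto \pi - \theta$. Applying the implicit function theorem to the equation $B_{S_{\epsilon}}(\ell) = R_{*}(\ell)$ on a slice of $\mathcal{L}_{-1}$ transverse to the $R$-orbit, the unperturbed orbit continues smoothly to a $3$-periodic orbit $\ell_{0}(\epsilon)$ of $B_{S_{\epsilon}}$ for every small $\epsilon$.

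The main obstacle is verifying that $d_{0}'(0) \neq 0$ for this family. The $R$-invariance together with the $\Pi$-antisymmetry of the perturbation forces the first-order shifts of the tangency points to satisfy $v_{+} + \sigma_{\Pi}(v_{-}) = 0$; under orthogonal projection onto $\ell_{0}$ this yields shifts of $q_{+}$ and $q_{-}$ with opposite signs along $\ell_{0}$, so $d_{0}'(0)$ reduces to twice the longitudinal component of the first-order displacement of $q_{-}$. A direct Jacobi-field computation in the upper half-space model of $H^{3}$ then expresses $d_{0}'(0)$ as a single Fourier-type pairing of the perturbation against the geometry of the unperturbed orbit, and a short evaluation shows it is nonzero for the displacement above. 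The essential feature that makes this possible in $H^{3}$ but not in $\mathbb{R}^{3}$ is that parallel transport of a line along an orthogonal geodesic in hyperbolic space produces a genuine shift of the foot of the common perpendicular (the phenomenon exploited already in Proposition \ref{NotParallel}), which prevents the longitudinal contributions at $q_{+}$ and $q_{-}$ from cancelling along the orbit. Choosing any $\epsilon_{0}$ with $d_{0}(\epsilon_{0}) \neq 0$ then provides the desired surface $S = S_{\epsilon_{0}}$ and periodic line $\ell = \ell_{0}(\epsilon_{0})$ with nontrivial holonomy $3 d_{0}(\epsilon_{0})$.
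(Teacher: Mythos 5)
Your overall strategy (perturb a symmetric configuration with zero holonomy and show the first variation of the holonomy is nonzero) is in the same spirit as the paper's proof, which also exhibits a one-parameter family with $H(0)=0$ and $H'(0)\neq 0$. However, your proposal has genuine gaps, starting with the base configuration. If $\ell_0$ lies in a totally geodesic plane $\Pi$ through the center $c$ of the round sphere, then $B(\ell_0)$ lies in the tangent plane $P_+$, and $P_+\cap\Pi=\ell_0$ (two distinct totally geodesic planes meet in at most one geodesic, and $P_+\neq\Pi$ since $P_+$ is tangent to the sphere while $\Pi$ passes through $c$); hence $B(\ell_0)\not\subset\Pi$, whereas $R_*(\ell_0)\subset\Pi$, so the equation $B(\ell_0)=R_*(\ell_0)$ has no solution with $\ell_0\subset\Pi$. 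The correct picture for the round sphere (stated at the end of the paper's introduction) is that orbits consist of lines \emph{orthogonal} to a fixed totally geodesic plane, not contained in one; your subsequent reflection argument and the decomposition of the first-order shifts of $q_\pm$ are built on the wrong geometry. Moreover, the continuation step is delicate: by the residual $SO(2)$-symmetry of the sphere about $\alpha$, the solution set of $B(\ell)=R_*(\ell)$ at $r=r_*$ is an entire circle of lines, and the resonant twist structure of this integrable map produces a Jordan block for the eigenvalue $1$ in $d(R_*^{-1}\circ B)$ (perturbing $r$ feeds into the circle direction), so passing to "a slice transverse to the orbit" does not by itself make the implicit function theorem applicable.

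The most serious gap, though, is that the one claim carrying all the content of the proposition --- $d_0'(0)\neq 0$ for your particular perturbation $\epsilon\sin(2\theta)\cos(3\phi)$ --- is asserted, not proved. Your symmetry reduction only shows that $d_0(\epsilon)$ is odd in $\epsilon$, which is perfectly consistent with $d_0\equiv 0$; since the holonomy vanishes identically in the Euclidean case for structural reasons, one cannot take for granted that a generic-looking perturbation produces a nonzero derivative, and the "Fourier-type pairing" must actually be evaluated. The paper avoids all of these difficulties by reverse-engineering: it first writes down four explicit lines $\ell_0,\dots,\ell_3$ in the Klein model forming a closed polygon under the reflection rule, then constructs (as in Proposition \ref{NotParallel}) a quadratically convex surface tangent to the four planes $P_k$ at the midpoints of the common perpendiculars $\sigma_k$, so that $B^k(\ell_0)=\ell_k$ by fiat and no continuation argument is needed; the holonomy is then computed in closed form via the Klein-model distance formula (\ref{distanciaKlein}) as an explicit function $H(a)$ with $H(0)=0$ and $H'(0)\neq 0$. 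To complete your argument you would need to (i) correct the unperturbed orbit to consist of lines orthogonal to $\Pi$, (ii) justify the persistence of the relative fixed point despite the degeneracy, and (iii) actually carry out the Jacobi-field computation of $d_0'(0)$ and verify it is nonzero.
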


We comment on the choice of the word holonomy in this context. For $\kappa
\leq 0$, let $\varpi :P=T^{1}M_{\kappa }\rightarrow \mathcal{G}_{\kappa }$
be the tautological line bundle, that is $\varpi (v)=[\gamma _{v}]$. It is
an $(\mathbb{R},+)$-principal bundle. The right action $\rho :P\times 
\mathbb{R}\rightarrow P$ is given by $\rho (u,t)=\gamma _{u}^{\prime }(t)$.
The line bundle $P_{\mathcal{U}}=\varpi ^{-1}(\mathcal{U})\rightarrow 
\mathcal{U}$ has two distinguished sections: $\sigma _{\pm }([\gamma
])=\gamma ^{\prime }(0)$ if $\gamma (0)=q_{\pm }$. The holonomy makes sense
only for periodic points and it is not associated with a particular
connection, but rather with a combination of the Levi-Civita connection on $%
M_{\kappa }$ and the flat connections induced by $\sigma _{\pm }$ along the
shortest segments joining $\ell _{k}$ with $\ell _{k+1}$.

While these results only provide the first steps towards understanding the
dynamical properties of the outer billiard map $B$ in hyperbolic space, they
also hint at the complexity and richness of the billiard system. Most of the
natural dynamical questions for the hyperbolic outer billiard map, for
example, regarding the existence of periodic orbits, remain open.

\section{Preliminaries}

\label{sec:prelim}

\subsection{The outer billiard map in the spherical case}\label{sphericalC}

We have presented in Subsection \ref{sec:obdef} the outer billiard map
associated with $S$ for $\kappa =0,1,-1$. The construction is clear for $\kappa =0,-1$.
Now we return to the spherical case. Before proving Proposition \ref{esfera}%
, we comment on the set $\mathcal{C}$ which we cut out of the billiard
table; see (\ref{UmenosC}). 
We define the map $\psi :S\rightarrow \mathcal{%
	C}$ as follows: Given $p\in S$, let $\psi \left( p\right) $ be the oriented
great circle obtained by intersecting $S^{3}$ with the subspace $%
T_{p}S\subset \mathbb{R}^{4}$ (we identify $T_p\mathbb{R}^{4}$ with $\mathbb{R}^{4}$ in the usual way), endowed with the orientation induced by that
of $T_{p}S$. Equivalently, and without using the immersion of the sphere in $%
\mathbb{R}^{4}$, for any positively oriented orthonormal basis $\left\{
u,v\right\} $ of $T_{p}S$, $\psi \left( p\right) =\left[ C_{p}\right] $,
where 
\begin{equation*}
	C_{p}\left( s\right) =\text{Exp}_{p}\left( \tfrac{\pi }{2}\left( \cos
	s~u+\sin s~v\right) \right) \cong \cos s~u+\sin s~v\text{.}
\end{equation*}%
With this notation, $\mathcal{C}$ is the image of $\psi $. Notice that if $c$
is an oriented circle in $\mathcal{C}$, then $-c$ is also in $\mathcal{C}$.


\begin{proof}[Proof of Proposition \protect\ref{esfera}]
	We verify that the same procedure as for the Euclidean and hyperbolic spaces
	applies here. By \cite{doCarmoWarner}, $S$ is contained in a hemisphere, say, the northern hemisphere $S_+^3$. 
	
	Let $\Pi$ denote the central projection from $S_+^3$ to the tangent plane $\R^3$ at the north pole (the so-called Beltrami map). It preserves strict convexity, since half great spheres in $S_+^3$ are mapped to affine $2$-planes and the order of contact is maintained by diffeomorphisms.
	
	Consider a great circle $c \in  \G_1 - \left\{ \mbox{oriented great circles intersecting } S \right\}$.  If $c$ is not contained in the equator $S^2 = \partial S_+^3$, then $\Pi(c)$ is an oriented affine line $\ell$ in $\R^3$.  Now, as shown in Section \ref{sec:obdef} for $\kappa=0$, there exist exactly two affine planes $\bar{P}_\pm$ containing $\ell$ and tangent to $\Pi(S)$, and $\Pi^{-1}(\bar{P}_\pm)$ are the desired great spheres containing $c$ and tangent to $S$ at points $p_\pm$. 
	Notice that $B(c) \in \mathcal U$, that is,  $B(c)$ is disjoint from $S$, since this holds for the lines in $\mathbb R^3$ corresponding to $c$ and $B(c)$.
	
	Now suppose that $c$ is contained entirely in the equator $S^2$.   Since $S$ is a positive distance from the equator $S^2$, any sufficiently small perturbation of the latter does not intersect $S$.  In particular, we may perturb the equator to a great sphere which does not contain the circle $c$ and argue as in the above paragraph.
	
	Let $d$ be the distance from $p_+$ to $c$.  If $d < \pi/2$, there exists a unique point $q_+ \in c$ realizing the distance, and the outer billiard map is well-defined.   If $d = \pi/2$, then $c$ belongs to $\mathcal{C}$, and so it is not in $\mathcal{U}$. The construction continues as in the Euclidean and hyperbolic cases.
	\end{proof}

We observe that if $q$ is any point of $c\in \mathcal{C}$, then the parallel
transport of $c$ along the geodesic joining $q$ with $p_{+}$ between $0$ and 
$\pi $ is a rotation by $\pi $. So the image of $c$ is the same great circle
with the opposite orientation (it would hold $B(c)=-c$, had we not excluded $%
\mathcal{C}$ from the billiard table).

\subsection{The Jacobi fields of the three dimensional space forms}

\label{sec:jacobi}

Here we provide a brief review of Jacobi fields, which arise naturally when
studying variations of geodesics, and thus play a central role in the proofs
of the main theorems. A more thorough treatment can be found in any standard
Riemannian geometry text, for example \cite{docarmo}.

Let $M$ be a complete Riemannian manifold and let $\gamma $ be a unit speed
geodesic of $M$. A Jacobi field $J$ along $\gamma $ is by definition a
vector field along $\gamma $ arising via a variation of geodesics as
follows: Let $\delta >0$ and $\phi :\mathbb{R}\times (-\delta ,\delta
)\rightarrow M$ be a smooth map such that $r\mapsto \phi (r,s)$ is a
geodesic for each $s\in (-\delta ,\delta )$ and such that $\phi (r,0)=\gamma
(r)$ for all $r$. Then 
\begin{equation*}
J(r)=\left. \tfrac{d}{ds}\right\vert _{0}\phi \left( r,s\right) \text{.}
\end{equation*}

Let $(M_{\kappa },\left\langle ,\right\rangle _{\kappa })$ denote the
three-dimensional complete simply connected manifold of constant sectional
curvature $\kappa $. The curvature tensor of $M_{\kappa }$ is given by 
\begin{equation}
R_{\kappa }(x,y)z=\kappa \left( \left\langle z,x\right\rangle _{\kappa
}y-\left\langle z,y\right\rangle _{\kappa }x\right) \text{,}
\label{curvatureK}
\end{equation}%
and the Jacobi fields along a geodesic $\gamma $ and orthogonal to $\gamma
^{\prime }$ are exactly the vector fields $J$ along $\gamma $ satisfying $%
\left\langle J,\gamma ^{\prime }\right\rangle _{\kappa }=0$ and 
\begin{equation}
\tfrac{D^{2}J}{dr^{2}}+\kappa J=0\text{,}  \label{equationJNormal}
\end{equation}%
where $\frac{D}{dr}$ is the covariant derivative associated with the Levi Civita connection of $M_\kappa$. 
Following a common abuse of notation, given a smooth vector field $J$ along
a curve $\gamma $, we write $J^{\prime }=\frac{DJ}{dr}$ if there is no
danger of confusion.

A Jacobi field $J$ along $\gamma $ is determined by the values $J(0)$ and $%
J^{\prime }(0)$ in the following way. Suppose that a Jacobi field $J$ along $%
\gamma $ satisfies $J(0)=u+a\gamma ^{\prime }(0)$ and $J^{\prime
}(0)=v+b\gamma ^{\prime }(0)$ where $a,b\in \mathbb{R}$ and $u,v\in \gamma
^{\prime \bot }$. Let $U$ and $V$ be the parallel vector fields along $%
\gamma $ with $U(0)=u$ and $V(0)=v$. Then 
\begin{equation}
J(r)=c_{\kappa }(r)U(r)+s_{\kappa }(r)V(r)+(a+rb)\gamma ^{\prime }(r)\text{,}
\label{JacobiK}
\end{equation}%
where 
\begin{equation*}
\begin{array}{ccc}
c_{1}(r)=\cos r\text{,} & c_{0}(r)=1\text{,} & c_{-1}(r)=\cosh r\text{,} \\ 
s_{1}(r)=\sin r\text{,} & s_{0}(r)=r\text{,} & s_{-1}(r)=\sinh r\text{.}%
\end{array}%
\end{equation*}%
Note that $s_{\kappa }^{\prime }=c_{\kappa }$ and $c_{\kappa }^{\prime
}=-\kappa s_{\kappa }$. Equation (\ref{JacobiK}) will allow us to perform
most computations without having to resort to coordinates of $M_{\kappa }$
or a particular model of it.

Next we see that the tangent vectors to the space $\mathcal{G}_{\kappa }$ at
an oriented geodesic $[\gamma ]$ may be identified with Jacobi fields along $%
\gamma $. Let $\gamma $ be a complete unit speed geodesic of $M_{\kappa }$
and let $\mathfrak{J}_{\gamma }$ be the space of all Jacobi fields along $%
\gamma $ which are orthogonal to $\gamma ^{\prime }$. There is a canonical
isomorphism 
\begin{equation}
T_{\gamma }\colon \mathfrak{J}_{\gamma }\rightarrow T_{[\gamma ]}\mathcal{%
\mathcal{G}_{\kappa }}\text{,}\hspace{1cm}T_{\gamma }(J)=\left. {\tfrac{d}{ds%
}}\right\vert _{0}[\gamma _{s}]\text{,}  \label{isoT}
\end{equation}%
where $\gamma _{s}$ is any variation of $\gamma $ by unit speed geodesics
associated with $J$. Moreover, if $J$ is the Jacobi field associated with a
variation $\phi :\mathbb{R}\times \left( -\delta ,\delta \right) \rightarrow
M_{\kappa }$ of $\gamma $ by unit speed geodesics ($J$ is not necessarily
orthogonal to $\gamma ^{\prime }$), then 
\begin{equation}
T_{\gamma }(J^{N})=\left. \tfrac{d}{ds}\right\vert _{0}[\phi _{s}],
\label{projectionK}
\end{equation}%
where $J^{N}(r)=J(r)-\langle J(r),\gamma ^{\prime }(r)\rangle _{\kappa
}\gamma ^{\prime }(r)$ (see Section 2 in \cite{Hitchin} or \cite{Salvai2}).

We offer one simple but useful application of the isomorphism $T_{\gamma }$.
Let $S$ be a smooth, closed, strictly convex surface in $M_{\kappa }$, and
let $\mathcal{M}\subset \mathcal{G}_{\kappa }$ be the collection of oriented
geodesics which are tangent to $S$.

\begin{lemma}
\label{tangentMscript} The isomorphism $T_\gamma$ identifies $\left\{ K\in 
\mathfrak{J}_{\gamma }\mid K( 0) \in T_{\gamma \left( 0\right) }S\right\} $
with the tangent space $T_{\left[ \gamma \right] }\mathcal{M}$.
\end{lemma}

\begin{proof}
It suffices to show that $T_{[\gamma ]}\mathcal{M}\subset T_{\gamma }
(\left\{ K\in \mathfrak{J}_{\gamma }\mid K\left( 0\right) \in
T_{\gamma\left( 0\right) }S\right\} )$, since both spaces have dimension $3$.
Let $X\in T_{[\gamma ]}\mathcal{M}$ and let $c$ be a smooth curve on $%
\mathcal{M}$ (defined on an interval $I$ containing~$0$) such that $%
c(0)=[\gamma ]$ and $c^{\prime }(0)=X$. For each $s\in I$, let $[\gamma
_{s}]\in \mathcal{M}$ such that $\gamma _{s}(0)\in S$ and $[\gamma
_{s}]=c(s) $. By (\ref{projectionK}), $X=T_{\gamma }(J^{N})$, where $J$ is
given by 
\begin{equation*}
J(r)=\left. \tfrac{d}{ds}\right\vert _{0}\gamma _{s}(r).
\end{equation*}%
Now $\gamma ^{\prime }(0)\in T_{\gamma (0)}S$, and since $s\mapsto \gamma
_{s}(0)$ is a smooth curve on $S$, $J(0)=\left. \frac{d}{ds}\right\vert
_{0}\gamma _{s}(0)\in T_{\gamma (0)}S$. Therefore, $J^{N}\left( 0\right)
=J(0)-\langle J(0),\gamma ^{\prime }(0)\rangle _{\kappa }\gamma ^{\prime
}(0)\in T_{\gamma (0)}S$, as desired.
\end{proof}

\subsection{K\"{a}hler structures on the spaces of oriented geodesics}

\label{sec:kahler}

In Section \ref{sec:gintro} we introduced the two canonical K\"{a}hler
structures $\left( g_{K},\mathcal{J}\right) $ and $\left( g_{\times },%
\mathcal{J}\right) $ on the space of oriented geodesics $\mathcal{G}_{\kappa
}$, for $\kappa =1,-1$, and also the K\"{a}hler structure $\left( g_{\times },%
\mathcal{J}\right) $ and the Poisson bivector field $\mathcal{P}$ on $%
\mathcal{G}_{0}$. Next we present the precise definitions in terms of the
isomorphism (\ref{isoT}). We also include the expressions of the associated
fundamental forms (see \cite{Alek,GG,GodoySalvaiMag,gk1, salvaimm, Salvai2}).

Given $\ell =\left[ \gamma \right] \in \mathcal{G}_{\kappa }$, the linear
complex structure $\mathcal{J}_{\ell }$ on $\mathfrak{J}_{\gamma }\cong
T_{\ell }\mathcal{G}_{\kappa }$, which was described geometrically in
Section \ref{sec:gintro}, is defined by 
\begin{equation}
\mathcal{J}_{\ell }\left( J\right) =\gamma ^{\prime }\times J,\hspace{0.5cm}%
\text{for }\ \ J\in \mathfrak{J}_{\gamma }\cong T_{\ell }\mathcal{G}_{\kappa
}\text{,}  \label{complex structure}
\end{equation}%
and the square norms of the metrics $g_{\times }$ and $g_{K}$ are given by 
\begin{equation*}
g_{\times }\left( J,J\right) =\langle \gamma ^{\prime }\times J,J^{\prime
}\rangle _{\kappa }\text{\ \ \ \ \ \ and\ \ \ \ \ \ \ }g_{K}\left(
J,J\right) =|J|_{\kappa }^{2}+\kappa |J^{\prime }|_{\kappa }^{2}\text{.}
\end{equation*}%
Notice that by (\ref{equationJNormal}) the right hand sides are constant
functions, so the left hand sides are well defined. By polarization we have 
\begin{eqnarray}
2\,g_{\times }(I,J) &=&\langle I\times J^{\prime }+J\times I^{\prime
},\gamma ^{\prime }\rangle _{\kappa }\hspace{0.5cm}\text{for}\ \kappa =-1,0,1%
\text{;}  \label{gCroos} \\
g_{K}(I,J) &=&\langle I,J\rangle _{\kappa }+\kappa \langle I^{\prime
},J^{\prime }\rangle _{\kappa }\hspace{0.5cm}\text{for}\ \kappa =\pm 1\text{.%
}  \label{gkappa}
\end{eqnarray}%
The second one is the push down onto $\mathcal{G}_{\kappa }$ of the left
invariant pseudo-Riemannian metric on the Lie group Iso$_{0}(M_{\kappa })$
given at the identity by a multiple of the Killing form. It is Riemannian
for $\kappa =1$ and split for $\kappa =-1$. 
Proposition 4 in \cite{Salvai2}
provides a geometric interpretation for the metrics $g_{\times }$ and $g_{K}$
in the case $\kappa =-1$, with characterizations of space-like, time-like and null curves in $\mathcal{G}_{-1}$ in both cases.

The associated fundamental forms are given by 
\begin{eqnarray}
\omega _{\times }( I,J) &=&g_{\times }( \mathcal{J}( I) ,J) =\tfrac{1}{2}%
(\langle I^{\prime },J\rangle _{\kappa }-\langle I,J^{\prime }\rangle
_{\kappa })\text{,}  \label{omegaCross} \\
\omega _{K}( I,J) &=&g_{K}( \mathcal{J}( I) ,J) =\langle I\times J+\kappa
I^{\prime }\times J^{\prime },\gamma ^{\prime }\rangle _{\kappa }\text{.}
\label{omegaKplus}
\end{eqnarray}

We comment that $p^{\ast }\omega _{\times }$ is a constant multiple of $%
\Omega $, where $p:T^{1}M_{\kappa }\rightarrow \mathcal{G}_{\kappa }$, $%
v\mapsto \lbrack \gamma _{v}]$ is the canonical submersion and $\Omega $ is
the restriction to $T^{1}M_{\kappa }$ of the canonical symplectic form on $%
TM_{\kappa }$ (identified with the cotangent bundle $T^{\ast }M_{\kappa }$
through the Riemannian metric).

The bilinear form $g_{K}$ degenerates for $\kappa =0$, but we have the
canonical Poisson structure on $\mathcal{G}_{0}$ well defined at each $\ell$ by 
\begin{equation*}
\mathcal{P}(\ell )=J\wedge \mathcal{J}_{\ell }(J)\text{,}
\end{equation*}%
where $J$ is any parallel Jacobi field along $\ell $, orthogonal to it, with $\left\Vert
J\right\Vert \equiv 1$. Although no such section $\ell \mapsto J_{\ell }\in
T_{\ell }\mathcal{G}_{0}$ exists globally (otherwise, it would induce a unit
vector field on the 2-sphere), $\mathcal{P}$ is easily seen to be well
defined and smooth; the Schouten bracket $\left[ \mathcal{P},\mathcal{P}%
\right] $ vanishes, since the distribution on $\mathcal{G}_{0}$ induced by $%
\mathcal{P}$ is integrable. In fact, the symplectic leaves are the
submanifolds of parallel lines.

\section{The smoothness of the outer billiard map}

Here we establish notation and prove various technical lemmas, working
towards the proof of Theorem \ref{thm:diffeo}. Let $S$ be a closed smooth
surface in $M_{\kappa }$. Let $n$ be the inward-pointing unit normal vector
field on $S$. The complex structure $i$ on $S$ is defined by $iz=n(p)\times
z $ for $z\in T_{p}S$.

Given $w\in T^{1}M_{\kappa }$, we denote by $\gamma _{w}$ the unique
geodesic in $M_{\kappa }$ with initial velocity $w$. For $\kappa =0,-1$, let 
$T=\infty $, and for $\kappa =1$, let $T=\pi /2$. Define 
\begin{equation*}
F:\mathcal{M}\times (-T,T)\cong T^{1}S\times (-T,T)\rightarrow \mathcal{G}%
_{\kappa }\text{,\ \ \ \ \ \ \ \ \ \ }F(u,t)=[\gamma _{u_{t}}]\text{,}
\end{equation*}%
where $u_{t}$ is the parallel transport on $M_{\kappa }$ of $u$ along $%
\gamma _{iu}$ between $0$ and $t$; see Figure \ref{fig:ut}. Let $F_{+}$ and $F_{-}$ denote the
restrictions of $F$ to $T^{1}S\times (0,T)$ and $T^{1}S\times (-T,0)$, respectively. 

\textbf{\begin{figure}[h!t]
		\centerline{
			\includegraphics[width=2in]
			{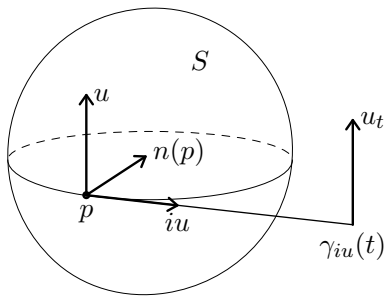}}
		\caption{The parallel transport of $u$ along $\gamma _{iu}$ between $0$ and $t$}
		\label{fig:ut}
\end{figure}}

By the construction in Section \ref{sec:obdef}, the outer billiard map $B:%
\mathcal{U}\rightarrow \mathcal{U}$ is equal to the composition 
\begin{equation}\label{composition}
B=F_{+}\circ g\circ \left( F_{-}\right) ^{-1}\text{,}
\end{equation}%
where $g:T^{1}S\times (-T,0)\rightarrow T^{1}S\times (0,T)$ is defined by $%
g(u,t)=(u,-t)$. Clearly $F$ is a smooth function and $g$ is a
diffeomorphism, and so the proof of Theorem \ref{thm:diffeo} reduces to
showing that $F_{\pm }$ are diffeomorphisms. Since they are bijections, we
must show that 
\begin{equation*}
(dF)_{\left( u,t\right) }:T_{u}T^{1}S\times T_{t}\mathbb{R}\rightarrow
T_{F(u,t)}\mathcal{G}_{\kappa }\cong \mathfrak{J}_{\gamma _{u_{t}}}\text{,}
\end{equation*}%
is nonsingular for all $u\in T^{1}S$ and $0\neq \left\vert t\right\vert <T$.
To verify this, we will compute the differential with respect to certain
canonical bases which we introduce next.

Given an oriented geodesic $\ell \in \mathcal{M}$, there are three
perturbations of $\ell $ which stay in $\mathcal{M}$: one which skates along 
$S$ in the direction of $\ell $, one which parallel transports $\ell $ along 
$S$ in the direction orthogonal to $\ell $, and one which rotates $\ell $,
maintaining the point of tangency. These three perturbations may be thought
of as generating the tangent space $T_{\ell }\mathcal{M}$. We formalize this
intuitive idea below, via the natural identification of $\mathcal{M}$ with $%
T^{1}S$.

In what follows we fix $u\in
T^{1}_pS$ and $t\neq 0$ and denote $v=iu$. 

Given a unit tangent vector $z\in T^{1}S$, we call $\sigma _{z}$ the
geodesic of $S$ with initial velocity $z$. Let $\pi :TS\rightarrow S$ be 
the canonical projection and let $\mathcal{K}%
_{u}:T_{u}TS\rightarrow T_{p}S$ be the connection operator, which is well
defined as follows: Given $\xi \in T_{u}TS$, let $U:\left( -\delta ,\delta
\right) \rightarrow TS$ be a smooth curve with $U\left( 0\right) =u$ and
initial velocity $\xi $. Then $K_{u}\left(
\xi \right) =\frac{DU}{dt}\left( 0\right) $, where $\frac{D}{dt}$ is the
covariant derivative along the foot-point curve $\pi \circ U $.

\begin{lemma}
\label{lem:w} For $m=1,2,3$, let $w_{m}:\mathbb{R}\rightarrow T^{1}S$ be the
curve defined by 
\begin{equation}
w_{1}(s)=\sigma _{u}^{\prime }(s),\hspace{0.5cm}w_{2}(s)=\tau _{0,s}^{\sigma
_{v}}(u),\hspace{0.5cm}w_{3}(s)=\cos s~u+\sin s~v,  \label{eqn:w}
\end{equation}%
where $\tau _{0,s}^{\sigma }$ denotes the parallel transport on $S$ along $%
\sigma $ between $0$ and $s$. Then $\left\{ w_{1}^{\prime }(0),w_{2}^{\prime
}(0),w_{3}^{\prime }(0)\right\} $ is a basis of $T_{u}T^{1}S$.
\end{lemma}

\begin{proof}
 We claim that
under the linear isomorphism 
\begin{equation*}
\varphi _{u}\colon T_{u}TS\rightarrow T_{p}S\times T_{p}S,\ \ \ \ \ \ \ \ \
\varphi _{u}(\xi )=(d\pi _{u}\xi ,\mathcal{K}_{u}\xi )
\end{equation*}%
(see for instance \cite{besse}), $w_{1}^{\prime }(0),w_{2}^{\prime
}(0),w_{3}^{\prime }(0)$ are mapped, respectively, to the linearly
independent vectors $(u,0)$, $(v,0)$ and $(0,v)$. We compute 
\begin{equation*}
d\pi _{u}(w_{1}^{\prime }(0))=\left. \tfrac{d}{ds}\right\vert _{0}\pi
(w_{1}(s))=\left. \tfrac{d}{ds}\right\vert _{0}\sigma _{u}(s)=u\text{,}
\end{equation*}%
and, by definition of $\mathcal{K}_{u}$, 
\begin{equation*}
\mathcal{K}_{u}(w_{1}^{\prime }(0))=\left. \tfrac{D}{ds}\right\vert
_{0}w_{1}(s)=\left. \tfrac{D}{ds}\right\vert _{0}\sigma _{u}^{\prime }(s)=0%
\text{.}
\end{equation*}%
Hence, $\varphi _{u}(w_{1}^{\prime }(0))=(u,0)$. The other cases are similar.
\end{proof}

We consider the basis $\mathcal{B}_t=\left\{ W_{1},W_{2},W_{3},W_{4}\right\} $
of $T_{u}T^{1}S\times T_{t}\mathbb{R}$, with%
\begin{equation}
W_{m}=(w_{m}^{\prime }(0),0)\text{, \ \ \ \ for }m=1,2,3\text{\ \ \ \ \ \ \
and\ \ \ \ \ \ \ }W_{4}=\left( 0,\left. \tfrac{d}{ds}\right\vert _{t}\right) 
\text{,}  \label{basisB}
\end{equation}%
where $w_{m}$ are the curves defined in (\ref{eqn:w}). Now, the
image of $W_{m}$ by $(dF)_{(u,t)}$ is a tangent vector to $\mathcal{G}%
_{\kappa }$ at $F\left( u,t\right) $, and so by (\ref{isoT}), it corresponds
to a Jacobi field along $\gamma _{u_{t}}$ in $\mathfrak{J}_{\gamma _{u_{t}}}$%
, which we call $J_{m}$. We state this in the following proposition, whose
proof is straightforward from the definitions.

\begin{proposition}
For $m=1,\dots ,4$ we have%
\begin{equation*}
(dF)_{(u,t)}\left( W_{m}\right) =T_{\gamma _{u_{t}}}\left( J_{m}\right) 
\text{,}
\end{equation*}%
where $J_{m}$ is the normal component of the Jacobi field arising from the
geodesic variations of $\gamma _{u_{t}}$ given by 
\begin{equation*}
\left( s,r\right) \mapsto \gamma _{(w_{m}(s))_{t}}\left( r\right)
\end{equation*}%
for $m=1,2,3$, and $\left( s,r\right) \mapsto \gamma _{u_{t+s}}\left(
r\right) $ for $m=4$.
\end{proposition}

We need $J_{m}$ explicitly. We consider the parametrized surface 
\begin{equation*}
f_{m}:\mathbb{R}^{2}\rightarrow M_{\kappa }\text{,\ \ \ \ \ \ \ \ }%
f_{m}(r,s)=\gamma _{iw_{m}(s)}(r)\text{.}
\end{equation*}%
In particular, $f_{m}(r,0)=\gamma _{v}(r)$. We write 
\begin{equation*}
\sigma _{m}=\pi \circ w_{m}=f_{m}(0,\cdot )\text{,}
\end{equation*}%
so $\sigma _{1}=\sigma _{u}$, $\sigma _{2}=\sigma _{v}$, and $\sigma
_{3}\equiv p$. Now we can describe the initial conditions of $J_{m}$ in
terms of some vector fields along $f_{m}$. 

\begin{proposition}\label{prop3.3}
For $m=1,2,3$ we have 
\begin{equation*}
J_{m}(0)=K_{m}(t)-\left\langle K_{m}(t),u_{t}\right\rangle _{\kappa }u_{t}%
\hspace{0.5cm}\text{and}\hspace{0.5cm}J_{m}^{\prime }(0)=\left. \tfrac{D}{ds}%
\right\vert _{0}Z_{m}(t,s)\text{,}
\end{equation*}%
where $K_{m}$ is the Jacobi vector field along $\gamma _{v}$ associated with
the geodesic variation $f_{m}$, that is, 
\begin{equation*}
K_{m}(r)=\left. \tfrac{d}{ds}\right\vert _{0}f_{m}(r,s)\text{,}
\end{equation*}%
and $Z_{m}$ is the vector field along the surface $f_{m}$ obtained by
parallel transporting $w_{m}(s)$ on $M_{\kappa }$ along the geodesic $\gamma
_{iw_{m}(s)}$ from $0$ to $r$, that is: 
\begin{equation*}
Z_{m}(r,s)=P_{0,r}^{\gamma _{iw_{m}(s)}}w_{m}(s)\text{.}
\end{equation*}

Also, 
\begin{equation*}
J_{4}(0)=v_{t}\hspace{0.5cm}\ \ \text{and}\hspace{0.5cm}\ \ J_{4}^{\prime
}(0)=0\text{,}
\end{equation*}%
where $v_{t}$ is the parallel transport on $M_{\kappa }$ of $v$ between $0$
and $t$ along $\gamma _{v}$.
\end{proposition}

\begin{proof} Let $\bar{J}_{m}\left( r\right) =\left. \frac{d}{ds}\right\vert _{0}\gamma
_{(w_{m}(s))_{t}}\left( r\right) $. By (\ref{projectionK}), $J_{m}\left( r\right)
=\left( \bar{J}_{m}\left( r\right) \right) ^{N}$. We compute%
\begin{equation*}
	\bar{J}_{m}\left( 0\right) =\left. \frac{d}{ds}\right\vert _{0}\gamma
	_{(w_{m}(s))_{t}}\left( 0\right) =\left. \frac{d}{ds}\right\vert _{0}\gamma
	_{iw_{m}(s)}(t)=\left. \frac{d}{ds}\right\vert _{0}f_{m}(t,s)=K_{m}\left(
	t\right) \text{.}
\end{equation*}%
Then $J_{m}\left( 0\right) $ is as stated. Also, ${J}_{m}^{\prime
}\left( 0\right) =\bar {J}_{m}^{\prime }\left( 0\right)$, since the geodesics in
the variation have unit speed. Now,
\begin{equation*}
\bar {J}_{m}^{\prime }\left( 0\right)=\left.\frac{D}{\partial r}\right|_0 \left.\frac{\partial}{\partial s}\right|_0 \gamma
_{(w_{m}(s))_{t}}\left( r\right)= \left.\frac{D}{\partial s}\right|_0 \left.\frac{\partial}{\partial r}\right|_0 \gamma
_{(w_{m}(s))_{t}}\left( r\right)=\left.\frac{D}{\partial s}\right|_0 (w_m (s))_t=\left. \tfrac{D}{ds}%
\right\vert _{0}Z_{m}(t,s)\text{.}
\end{equation*}
The validity of the remaining assertions, involving $J_4$, follows from similar (simpler) arguments.
\end{proof}

\bigskip

We require explicit formulas for $K_{m}$ and $\left. \tfrac{D}{ds}%
\right\vert _{0}Z_{m}(t,s)$, which are vector fields along $\gamma _{v}$. In
the next three lemmas we compute their coordinates with respect to the basis 
$\left\{ u_{r},v_{r},n_{r}\right\} $ of $T_{\gamma _{v}\left( r\right)
}M_{\kappa }$, where $u_{r}$, $v_{r}$, and $n_{r}$ are obtained by parallel
transporting $u$, $v$, and $n(p)$ along $\gamma _{v}$, between $0$ and $r$.

Given $p\in S$, the shape operator $A_{p}:T_{p}S\rightarrow T_{p}S$ is
defined by 
\begin{equation}
A_{p}\left( x\right) =-\nabla _{x}n\text{,}  \label{shapeop}
\end{equation}%
where $\nabla $ denotes the Levi-Civita connection of $M_{\kappa }$. In what
follows we assume that $A_{p}$ is positive definite at each $p\in S$ (that
is, $S$ is quadratically convex).

We consider the matrix of $A_{p}$ with respect to the orthonormal basis $%
\left\{ u,v\right\} $ and call $b_{ij}$ its entries, that is, $\left[ A_{p}%
\right] _{\left\{ u,v\right\} }=\left( 
\begin{array}{cc}
b_{11} & b_{12} \\ 
b_{21} & b_{22}%
\end{array}%
\right) $, with $b_{12}=b_{21}$.

\begin{lemma}
\label{KM} For $m=1,2,3$, the Jacobi vector field $K_{m}$ along $\gamma _{v}$
is given by 
\begin{equation}
\begin{array}{l}
K_{1}(r)=c_{\kappa }(r)u_{r}+b_{21}s_{\kappa }(r)n_{r}, \\ 
K_{2}(r)=v_{r}+b_{22}s_{\kappa }(r)n_{r}, \\ 
K_{3}(r)=-s_{\kappa }(r)u_{r}.%
\end{array}
\label{KmFormula}
\end{equation}
\end{lemma}

\begin{proof}
We compute the initial values of $K_{m}$ and $K_{m}^{\prime }$ and use (\ref%
{JacobiK}). We write down the details for $m=1$. The other cases are
similar. We compute 
\begin{equation*}
K_{1}(0)=\left. \tfrac{d}{ds}\right\vert _{0}f_{1}(0,s)=\left. \tfrac{d}{ds}%
\right\vert _{0}\sigma _{1}(s)=u
\end{equation*}%
and 
\begin{equation*}
K_{1}^{\prime }(0)=\left. \tfrac{D}{\partial r}\right\vert _{0}\left. \tfrac{%
\partial }{\partial s}\right\vert _{0}f_{1}(r,s)=\left. \tfrac{D}{\partial s}%
\right\vert _{0}\left. \tfrac{\partial }{\partial r}\right\vert
_{0}f_{1}(r,s)=\left. \tfrac{D}{\partial s}\right\vert _{0}iw_{1}(s)\text{.}
\end{equation*}%
We compute the coordinates of $K_{1}^{\prime }(0)$ with respect to the
orthonormal basis $\{u,v,n(p)\}$ of $T_{p}M_{\kappa }$. To obtain $%
\left\langle K_{1}^{\prime }(0),n(p)\right\rangle _{\kappa }$, we observe
that $\left\langle iw_{1}(s),n(\sigma _{1}(s))\right\rangle _{\kappa }=0$
for all $s$. Hence, 
\begin{equation*}
\left\langle \left. \tfrac{D}{\partial s}\right\vert
_{0}iw_{1}(s),n(p)\right\rangle _{\kappa }=-\left\langle iu,\left. \tfrac{D}{%
\partial s}\right\vert _{0}n(\sigma _{1}(s))\right\rangle _{\kappa
}=-\left\langle v,\nabla _{u}n\right\rangle _{\kappa }=\left\langle
v,A_{p}(u)\right\rangle _{\kappa }=\,b_{21}\text{.}
\end{equation*}%
In the same way, $\left\langle K_{1}^{\prime }(0),u\right\rangle _{\kappa
}=0=\left\langle K_{1}^{\prime }(0),v\right\rangle _{\kappa }$. Therefore, $%
K_{1}^{\prime }(0)=b_{21}n(p)$. Notice that $K_{m}$ is not necessarily
orthogonal to $\gamma _{v}^{\prime }$.
\end{proof}

For the sake of simplicity of notation we denote by $Y_{m}\left( r\right)
=\left. \tfrac{D}{ds}\right\vert _{0}Z_{m}(r,s)$.

\begin{lemma}
\label{ZY}For $m=1,2,3$, the vector field $Y_{m}$ along $\gamma _{v}$ is
given by 
\begin{equation*}
Y_{1}(r) =\kappa s_{\kappa }(r)v_{r}+b_{11}n_{r}\text{,} \ \ \ \ \ \ \ \ 
Y_{2}(r)=b_{12}n_{r}\text{,} \ \ \ \ \ \ \ \ 
Y_{3}(r)=c_{\kappa }(r)v_{r}\text{.}
\end{equation*}
\end{lemma}

Before proving the lemma we introduce the vector field $N_{m}$ along the
surface $f_{m}$ obtained by parallel transporting $n(\sigma _{m}(s))$ on $%
M_{\kappa }$ along the geodesic $\gamma _{iw_{m}(s)}$ from $0$ to $r$, that
is, 
\begin{equation*}
N_{m}(r,s)=P_{0,r}^{\gamma _{iw_{m}(s)}}n(\sigma _{m}(s))\text{.}
\end{equation*}

\begin{lemma}
\label{Nm} Let $\zeta _{m}\left( r\right) =\left\langle \left. \tfrac{D}{ds}%
\right\vert _{0}N_{m}(r,s),u_{r}\right\rangle _{\kappa }$. Then $\zeta
_{1}\equiv -b_{11}$, $\zeta _{2}\equiv -b_{12}$, and $\zeta _{3}\equiv 0$.
\end{lemma}

\begin{proof}
We compute%
\begin{eqnarray*}
\zeta _{m}^{\prime }(r) &=&\tfrac{d}{dr}\left\langle \left. \tfrac{D}{ds}%
\right\vert _{0}N_{m}(r,s),u_{r}\right\rangle _{\kappa }=\left\langle \tfrac{%
D}{dr}\left. \tfrac{D}{ds}\right\vert _{0}N_{m}(r,s),u_{r}\right\rangle
_{\kappa } \\
&=&\left\langle \left. \tfrac{D}{ds}\right\vert _{0}\tfrac{D}{dr}%
N_{m}(r,s)+R_{\kappa }\left( \tfrac{\partial f_{m}}{\partial r}(r,0),\tfrac{%
\partial f_{m}}{\partial s}(r,0)\right) N_{m}(r,0),u_{r}\right\rangle
_{\kappa } \\
&=&\left\langle R_{\kappa }(v_{r},K_{m}(r))n_{r},u_{r}\right\rangle _{\kappa
}=\kappa \left\langle \left\langle n_{r},v_{r}\right\rangle _{\kappa
}K_{m}(r)-\left\langle n_{r},K_{m}(r)\right\rangle _{\kappa
}v_{r},u_{r}\right\rangle _{\kappa }=0\text{,}
\end{eqnarray*}%
by (\ref{curvatureK}). Hence, $\zeta _{m}$ is constant, equal to 
\begin{equation*}
\zeta _{m}(0)=\left\langle \left. \tfrac{D}{ds}\right\vert
_{0}N_{m}(0,s),u_{0}\right\rangle _{\kappa }=\left\langle \nabla _{\sigma
_{m}^{\prime }(0)}n,u\right\rangle _{\kappa }=-\left\langle A_{p}(\sigma
_{m}^{\prime }(0)),u\right\rangle _{\kappa }.
\end{equation*}%
Now, the assertions follow from the definition of $\sigma _{m}$ and the
values of the entries of the matrix $\left[ A_{p}\right] _{\left\{
u,v\right\} }$.
\end{proof}

\begin{proof}[Proof of Lemma \protect\ref{ZY}]
Observe that $\left\langle Z_{m},Z_{m}\right\rangle _{\kappa }$, $%
\left\langle Z_{m},N_{m}\right\rangle _{\kappa }$ and $\langle Z_{m},\frac{%
\partial f_{m}}{\partial r}\rangle _{\kappa }$ are constant functions of the
second variable $s$. Hence we can compute the components $Y_{m}(r)$ with
respect to the basis $\left\{ u_{r},v_{r},n_{r}\right\} $ of $T_{\gamma
_{v}\left( r\right) }M_{\kappa }$ as follows: 
\begin{equation*}
\left\langle Y_{m}(r),u_{r}\right\rangle _{\kappa }=\left\langle \left. 
\tfrac{D}{ds}\right\vert _{0}Z_{m}(r,s),Z_{m}(r,0)\right\rangle _{\kappa }=0.
\end{equation*}%
Also 
\begin{eqnarray*}
\left\langle Y_{m}(r),v_{r}\right\rangle _{\kappa } &=&\left\langle \left. 
\tfrac{D}{ds}\right\vert _{0}Z_{m}(r,s),v_{r}\right\rangle _{\kappa
}=-\left\langle Z_{m}(r,0),\left. \tfrac{D}{ds}\right\vert _{0}\gamma
_{iw_{m}\left( s\right) }^{\prime }(r)\right\rangle _{\kappa } \\
&=&-\left\langle u_{r},\tfrac{D}{dr}\left. \tfrac{d}{ds}\right\vert
_{0}\gamma _{iw_{m}(s)}(r)\right\rangle _{\kappa }=-\left\langle u_{r},%
\tfrac{D}{dr}K_{m}(r)\right\rangle _{\kappa }\text{.}
\end{eqnarray*}%
By Lemma \ref{KM}, we have 
\begin{equation*}
-\left\langle u_{r},K_{m}^{\prime }(r)\right\rangle _{\kappa }=%
\begin{cases}
\kappa s_{\kappa }(r), & \text{if }m=1\text{,} \\ 
0, & \text{if }m=2\text{,} \\ 
c_{\kappa }(r), & \text{if }m=3\text{.}%
\end{cases}%
\end{equation*}%
In the same way, 
\begin{equation*}
\left\langle Y_{m}(r),n_{r}\right\rangle _{\kappa }=\left\langle \left. 
\tfrac{D}{ds}\right\vert _{0}Z_{m}(r,s),N_{m}(r,0)\right\rangle _{\kappa
}=-\left\langle Z_{m}(r,0),\left. \tfrac{D}{ds}\right\vert
_{0}N_{m}(r,s)\right\rangle _{\kappa }=-\zeta _{m}(r)\text{,}
\end{equation*}%
with $\zeta _{m}$ as in Lemma \ref{Nm}.
\end{proof}

With the computational lemmas above, we can present the proof of Theorem \ref%
{thm:diffeo}.

\begin{proof}[Proof of Theorem \protect\ref{thm:diffeo}]
To complete the proof of Theorem \ref{thm:diffeo}, it remains to show that 
\begin{equation*}
(dF)_{(u,t)}:T_{u}T^{1}S\times T_{t}\mathbb{R}\rightarrow T_{F(u,t)}\mathcal{G}%
_{\kappa }\cong \mathfrak{J}_{\gamma _{u_{t}}}
\end{equation*}%
is nonsingular for all $u\in T^{1}S$ and $0\neq |t|<T$. To verify this, we
compute the matrix of $(dF)_{(u,t)}$ with respect to the bases $\mathcal{B}_t$
(given in (\ref{basisB})) of $T_{u}T^{1}S\times T_{t}\mathbb{R}$ and $%
\mathcal{E}_{t}=\left\{ E_{1}^{t},\dots ,E_{4}^{t}\right\} $ of $\mathfrak{J}%
_{\gamma _{u_{t}}}$, where $E_{m}^{t}$ are the Jacobi fields along $\gamma
_{u_{t}}$ whose initial conditions are 
\begin{equation}
\begin{array}{cccc}
E_{1}^{t}(0)=0, & E_{2}^{t}(0)=n_{t}, & E_{3}^{t}(0)=0, & E_{4}^{t}(0)=v_{t},
\\ 
&  &  &  \\ 
(E_{1}^{t})^{\prime }(0)=n_{t}, & (E_{2}^{t})^{\prime }(0)=0, & 
(E_{3}^{t})^{\prime }(0)=v_{t}, & (E_{4}^{t})^{\prime }(0)=0.%
\end{array}
\label{base}
\end{equation}%
By Proposition \ref{prop3.3} and Lemmas \ref{KM} and \ref{ZY}, we have 
\begin{equation*}
J_{m}(0)=%
\begin{cases}
b_{21}s_{\kappa }(t)n_{t}, & m=1\text{,} \\ 
v_{t}+b_{22}s_{\kappa }(t)n_{t}, & m=2\text{,} \\ 
0, & m=3\text{,} \\ 
v_{t}, & m=4\text{,}%
\end{cases}%
\hspace{0.5cm}\text{and}\hspace{0.5cm}J_{m}^{\prime }(0)=%
\begin{cases}
\kappa s_{\kappa }(t)v_{t}+b_{11}n_{t}, & m=1\text{,} \\ 
b_{12}n_{t}, & m=2\text{,} \\ 
c_{\kappa }(t)v_{t}, & m=3\text{,} \\ 
0, & m=4\text{.}%
\end{cases}%
\end{equation*}%
Hence, calling $C_t$ the matrix of $(dF)_{(u,t)}$ with respect to the bases $%
\mathcal{B}_t$ and $\mathcal{E}_{t}$, we obtain that 
\begin{equation}\label{Ct}
C_t=\left( 
\begin{array}{cccc}
b_{11} & b_{12} & 0 & 0 \\ 
b_{21}s_{\kappa }(t) & b_{22}s_{\kappa }(t) & 0 & 0 \\ 
\kappa s_{\kappa }(t) & 0 & c_{\kappa }(t) & 0 \\ 
0 & 1 & 0 & 1%
\end{array}%
\right) .
\end{equation}

Therefore, $\det C_t=c_{\kappa }(t)s_{\kappa }(t)b$, where $b= \det
\left(A_p\right)$. Since, by hypothesis, $0 \neq |t| < T$ and $%
[A_p]_{\{u,v\}} $ is definite, we have that $C_t$ is nonsingular.
\end{proof}

\begin{proof}[Proof of Proposition \ref{notSmooth}] By the reflection invariance, $B$ preserves the oriented
	lines orthogonal to the plane $P=\left\{ \left( x,y,z\right) \mid
	y=0\right\} $ and induces in the obvious manner the outer billiard map $%
	\widetilde{B}$ on $P$ determined by the closed strictly convex curve $\gamma 
	$ with image $S\cap P$, which includes the graph of $f$. Accordingly, we
	identify $P$ and the set of oriented lines orthogonal to it with $\mathbb{R}%
	^{2}$.
	
	Given a small $s\geq 0$, we next compute $\widetilde{B}\left( -1,s\right) $.
	Let $\ell _{s}$ be the straight line passing through $\left( -1,s\right) $
	tangent to $\gamma $ at $\left( x_{s},f\left( x_{s}\right) \right) $, with $%
	-1<x_{s}\leq 0$. Then $\ell _{s}$ can be parametrized by $t\mapsto
	l_{s}\left( t\right) =\left( x_{s},f\left( x_{s}\right) \right) +t\left(
	1,f^{\prime }\left( x_{s}\right) \right) $ and there exists a unique $t_{s}$
	such that $l_{s}\left( t_{s}\right) =\left( -1,s\right) $. We have%
	\begin{equation}
		x_{0}=0\text{,\ \ \ \ \ \ }x_{s}+t_{s}=-1\text{\ \ \ \ \ and\ \ \ \ \ }%
		f\left( x_{s}\right) +t_{s}f^{\prime }\left( x_{s}\right) =s\text{.}
		\label{*}
	\end{equation}
		Hence,%
	\begin{equation*}
		\widetilde{B}\left( -1,s\right) =l_{s}\left( -t_{s}\right) =\left(
		x_{s}-t_{s},f\left( x_{s}\right) -t_{s}f^{\prime }\left( x_{s}\right)
		\right) =\left( 2x_{s}+1,2f\left( x_{s}\right) -s\right) .
	\end{equation*}%
	Suppose that $\widetilde{B}$ is smooth, then so are $s\mapsto x_{s}$ and $%
	s\mapsto t_{s}$. We compute the right derivative at $s=0$ of both sides of
	the last equation in (\ref{*}) and obtain%
	\begin{equation*}
		0=f^{\prime }\left( 0\right) x_{0}^{\prime }+t_{0}^{\prime }f^{\prime
		}\left( 0\right) +t_{0}f^{\prime \prime }\left( 0\right) x_{0}^{\prime }=1%
		\text{,}
	\end{equation*}%
	a contradiction.
\end{proof}

\section{The K\"{a}hler formulation of the outer billiard map}

\label{sec:kahlerdef}

In order to prove Theorem \ref{thm:equiv} we need the presentation of $%
\mathcal{G}_{\kappa }$ as a symmetric homogeneous space. The details of the
following description can be found for instance in \cite{GodoySalvaiMag}.
For $\kappa =\pm 1$, we consider the standard presentation of $M_{\kappa }$
as a submanifold of $\mathbb{R}^{4}$: If $\{e_{0},e_{1},e_{2},e_{3}\}$ is
the canonical basis of $\mathbb{R}^{4}$, then $M_{k}$ is the connected
component of $e_{0}$ of the set 
\begin{equation*}
\left\{ (t,x,y,z)\in \mathbb{R}^{4}\,|\,\kappa
t^{2}+x^{2}+y^{2}+z^{2}=\kappa \right\} .
\end{equation*}

Let $G_{\kappa }$ be the identity component of the isometry group of $%
M_{\kappa }$, that is, $G_{1}=SO_{4}$ and $G_{-1}=O_{o}\left( 1,3\right) $.
The group $G_{\kappa }$ acts smoothly and transitively on $\mathcal{G}%
_{\kappa }$ as follows: $g\cdot \lbrack \gamma ]=[g\circ \gamma ]$. Let $%
\gamma _{o}$ be the geodesic in $M_{\kappa }$ with $\gamma _{o}(0)=e_{0}$
and initial velocity $e_{1}\in T_{e_{0}}M_{\kappa }$ and let $H_{\kappa }$
be the stabilizer group of $[\gamma _{o}]$ in $G_{\kappa }$. Then there
exists a diffeomorphism $\phi :G_{\kappa }/H_{\kappa }\rightarrow \mathcal{G}%
_{\kappa }$, given by $\phi (gH_{\kappa })=g\cdot \lbrack \gamma _{o}\rbrack$%
.

The Killing form of Lie\thinspace $(G_{\kappa })$ provides $G_{\kappa }$
with a bi-invariant metric and thus there exists a unique pseudo-Riemannian
metric $\tilde{g}_{K }$ on $G_{\kappa }/H_{\kappa }$ such that the canonical
projection $\pi :G_{\kappa }\rightarrow G_{\kappa }/H_{\kappa }$ is a
pseudo-Riemannian submersion. The diffeomorphism $\phi $ turns out to be an
isometry onto $\mathcal{G}_{\kappa }$ endowed with a constant multiple of
the metric $g_{K}$ defined in (\ref{gkappa}).

Besides, it is well known that $(G_{\kappa }/H_{\kappa },\tilde{g}_{K}) $ is
a pseudo-Riemannian symmetric space. In particular, if Lie$\,\left(
G_{\kappa }\right) =$ Lie$\,\left( H_{\kappa }\right) \oplus \mathfrak{p}%
_{\kappa }$ is the Cartan decomposition determined by $[\gamma _{o}]$, then
for any $Z\in \mathfrak{p}_{\kappa }$ the curve $t\mapsto \exp \left(
tZ\right) H_{\kappa }$ is a geodesic of $G_{\kappa }/H_{\kappa }$.

\bigskip

\begin{proof}[Proof of Theorem \protect\ref{thm:equiv}]
To see that $B=B^{\prime }$, since $\mathcal{M}\cong T^{1}S$, we only need to
show that for $u\in T_{p}^{1}S$, the curve $\Gamma (t)=:\left[ \gamma
_{u_{t}}\right] $ is the geodesic in $(\mathcal{G}_{\kappa },g_{K})$ with
initial velocity $\mathcal{J}\nu(u)$, where $\nu(u)$ is the outward pointing
normal vector of $\mathcal{M}$ at $[\gamma _{u}]$. First we verify that $%
\Gamma ^{\prime }(0)=\mathcal{J}\nu(u)$ and afterwards that $\Gamma $ is a
geodesic of $\mathcal{G}_{\kappa }$.

The initial velocity of $\Gamma$ corresponds, via the isomorphism $T_{\gamma
_{u}}$ of (\ref{isoT}), with the Jacobi field along $\gamma _{u}$ given by 
\begin{equation*}
J(s)=\left. \tfrac{d}{dt}\right\vert _{0}\gamma _{u_{t}}(s)\text{.}
\end{equation*}%
A straightforward computation shows that $J$ is determined by the conditions 
$J(0)=iu$ and $J^{\prime }(0)=0$.

On the other hand, let $I \in \mathfrak{J}_{\gamma_u}$ be the Jacobi field
given by the initial conditions $I(0)=-n(p)$ and $I^{\prime }(0) = 0$. We
claim that after the identification with $T_{[\gamma_u]}\mathcal{G}_\kappa$, 
$I$ corresponds to the unit outward-pointing normal vector field $\nu$ on $%
\mathcal{M}$. Indeed, 
\begin{equation*}
g_{K}( I,I) =\langle I,I\rangle _{\kappa }+\kappa \langle I^{\prime
},I^{\prime }\rangle _{\kappa }=\left( -1\right) ^{2}\left\vert
n(p)\right\vert _{\kappa }^{2}=1,
\end{equation*}
and for $K \in T_{\left[ \gamma _{u}\right] }\mathcal{M}$, $K(0) \in
T_{\gamma _{u}\left( 0\right) }S$ by Lemma \ref{tangentMscript}, and so 
\begin{equation*}
g_{K}(I,K)=-\langle n(p),K( 0) \rangle _{\kappa }=0.
\end{equation*}
Also, $\nu(u)$ points to $\mathcal{U}$ since $n$ is the inward-pointing unit normal vector field of $S$.

Now, by the definition of the complex structure $\mathcal{J}$ in (\ref%
{complex structure}), the identity $\mathcal{J}_{[\gamma _{u}]}( \nu_{[\gamma
_{u}]}) =\Gamma ^{\prime }( 0) $ translates into $J=\gamma _{u}^{\prime
}\times I$, which holds since $J(0) = iu = n(p) \times u = \gamma^{\prime
}_u(0) \times I(0)$ and $J^{\prime }(0) = 0 = I^{\prime }(0)$.

Next we show that $\Gamma $ is a geodesic. By homogeneity, we may suppose
that $p=e_{0}$, the inward pointing unit normal vector of $S$ at $e_{0}$ is $%
e_{3}$ and $u=e_{1}$. Hence $iu=e_{2}$ and $u_{t}=e_{1}\in T_{\gamma \left(
t\right) }M_{\kappa }$ where $\gamma ( t) =c_{\kappa }( t)e_{0}+s_{\kappa }(
t)e_{2}$.

Let $Z$ be the linear transformation of $\mathbb{R}^{4}$ defined by $Z(
e_{0}) =e_{2}$, $Z( e_{2}) =-\kappa e_{0}$ and $Z( e_{1}) =Z( e_{3}) =0$. It
is easy to verify that $Z\in $ Lie$(G_{\kappa })$ and $\exp ( tZ) %
\left[ \gamma _{u}\right] =\left[ \gamma _{u_{t}}\right] $ for all $t$. Now,
one can see in the preliminaries of \cite{GodoySalvaiMag} (page 752) that $%
Z\in \mathfrak{p}_{\kappa }$, and so $\Gamma $ is a geodesic by the
properties of symmetric spaces presented above.
\end{proof}

\begin{proof}[Proof of Proposition \ref{gCross}]
	Suppose that $\ell =\left[ \gamma _{u}\right] $ with $\gamma
_{u}\left( 0\right) =p\in S$. The Jacobi field $I$ along $\gamma _{u}$ with $%
I\left( 0\right) =0$\ \ and \ \ $I^{\prime }\left( 0\right) =iu$ spans the
normal space to $T_{\ell }\mathcal{M}$ and is null (see (\ref{gCroos})). 
By Lemma \ref{tangentMscript}, $I$ is also tangent to $\mathcal{M}$ at $\ell $
(this shows, in particular, that $g_{\times}$ degenerates on $T_{\ell }\mathcal{M}
$). Now,%
\begin{equation*}
	\left( \mathcal{J}I\right) \left( 0\right) =0\text{\ \ \ and\ \ \ \ }\left( 
	\mathcal{J}I\right) ^{\prime }\left( 0\right) =n_{p}
\end{equation*}%
(where $n$ is the inward-pointing unit normal vector field of $S$, as
before). Again by Lemma \ref{tangentMscript}, $\mathcal{J}I\in T_{\ell }\mathcal{M}$.

The geodesic $\Gamma $ in $\mathcal{G}_{\kappa }$ with $\Gamma \left(
0\right) =\ell $ and $\Gamma ^{\prime }\left( 0\right) =\mathcal{J}I$
consists of oriented geodesics in $M_{\kappa }$ rotating around $p$ in the
totally geodesic surface orthogonal to $S$ containing the image of $\gamma _{u}
$, that is,%
\begin{equation*}
	\Gamma \left( t\right) =\left[ \gamma _{\cos t~u+\sin t~n_{p}}\right] 
\end{equation*}%
(this can be verified with computations similar to those we made at the end
of the proof of the theorem above). Hence, for each $t$, the oriented
geodesic $\Gamma \left( t\right) $ intersects $S$ and so, the image of $%
\Gamma $ is disjoint from $\mathcal{U}$. Since any normal $N$ is multiple of 
$I$, the proof concludes.
\end{proof}

\section{The symplectic properties of the outer billiard map}

\begin{proof}[Proof of Theorem \protect\ref{thm:symp} (a)]
As in  (\ref{composition}), we write 
\begin{equation*}
B=F_{+}\circ g\circ F_{-}^{-1}.
\end{equation*}
Given $\ell \in \mathcal{U}$, suppose that $\ell =F_{-}(u,-t)$ for some $%
0<t<T$ . We compute the matrix of $(dB)_{\ell }$ with respect to the canonical
bases $\mathcal{E}_{-t}$ and $\mathcal{E}_{t}$ of $\mathfrak{J}_{\gamma
_{u_{-t}}}$ and $\mathfrak{J}_{\gamma _{u_{t}}}$ as in (\ref{base}),
respectively, obtaining 
\begin{equation}\label{matrixDB}
\begin{array}{lll}
	\left[ \left( dB\right) _{\ell }\right] _{\mathcal{E}_{-t},\mathcal{E}_{t}}
	&=&\left[ \left( dF\right) _{\left( u,t\right) }\right] _{\mathcal{B}_{t},%
		\mathcal{E}_{t}}\left[ dg_{\left( u,-t\right) }\right] _{\mathcal{B}_{-t},%
		\mathcal{B}_{t}}\left( \left[ \left( dF\right) _{\left( u,-t\right) }\right]
	_{\mathcal{B}_{-t},\mathcal{E}_{-t}}\right) ^{-1}   \\
	&& \\
	&=&C_{t}\left( 
	\begin{array}{cc}
		I & 0 \\ 
		0 & R%
	\end{array}%
	\right) \left( C_{-t}\right) ^{-1}=\left( 
	\begin{array}{cc}
		R & 0_{2} \\ 
		D & R%
	\end{array}%
	\right) \text{,}
\end{array}
\end{equation}
where  $C_t$ is as in (\ref{Ct}), $I$ is the $(2\times 2)$-identity matrix, $R=\left( 
\begin{array}{cc}
1 & 0 \\ 
0 & -1%
\end{array}%
\right) $ and 
\begin{equation*}
D=\frac{2}{b}\left( 
\begin{array}{cc}
s_{\kappa }\left( t\right) \kappa b_{22} & \kappa b_{12} \\ 
-b_{21} & -\frac{b_{11}}{s_{\kappa }\left( t\right) }%
\end{array}%
\right)\text{,}
\end{equation*} 
with $b=\det \left(A_{p}\right)$. Thus, 
\begin{equation*}
\begin{array}{c}
(dB)_{\ell }\left( E_{1}^{-t}\right) =E_{1}^{t}+\frac{2}{b}s_{\kappa }\left(
t\right) \kappa b_{22}E_{3}^{t}-\frac{2}{b}b_{21}E_{4}^{t}, \\ 
\\ 
(dB)_{\ell }\left( E_{2}^{-t}\right) =-E_{2}^{t}+\frac{2}{b}\kappa
b_{12}E_{3}^{t}-\frac{2}{b}\frac{b_{11}}{s_{\kappa }\left( t\right) }%
E_{4}^{t},%
\end{array}%
\end{equation*}
\begin{equation*}
(dB)_{\ell }\left( E_{3}^{-t}\right) =E_{3}^{t} \text{\qquad and \qquad} 
 (dB)_{\ell }\left( E_{4}^{-t}\right) =-E_{4}^{t}.
\end{equation*}

Recall from (\ref{omegaKplus}) the definition of the symplectic form $\omega
_{K}$. Straightforward computations yield that 
$$
\left\langle E_{i}^{t}\left(
0\right) \times E_{j}^{t}\left( 0\right) ,u_{t}\right\rangle_{\kappa}
=\left\langle \left( E_{i}^{t}\right) ^{\prime }\left( 0\right) \times
\left( E_{j}^{t}\right) ^{\prime }\left( 0\right)
,u_{t}\right\rangle_{\kappa} =0
$$ 
for all $1\leq i<j\leq 3$, 
except for 
\begin{equation*}
\left\langle E_{2}^{t}\left( 0\right) \times E_{4}^{t}\left( 0\right)
,u_{t}\right\rangle_{\kappa} =\left\langle \left( E_{1}^{t}\right) ^{\prime
}\left(0\right) \times \left( E_{3}^{t}\right) ^{\prime }\left( 0\right)
,u_{t}\right\rangle _{\kappa}=-1\text{.}
\end{equation*}%
Hence, 
\begin{equation*}
\left[ \omega _{K}\right] _{\mathcal{E}_{t}}=\left( 
\begin{array}{cc}
0_{2} & \rho \\ 
-\rho & 0_{2}%
\end{array}%
\right)
\end{equation*}%
with $\rho =\left( 
\begin{array}{cc}
\kappa & 0 \\ 
0 & 1%
\end{array}%
\right) $. We observe that $\left[ \omega _{K}\right] _{\mathcal{E}_{-t}}=%
\left[ \omega _{K}\right] _{\mathcal{E}_{t}}$. Hence, calling $H$ the matrix
in (\ref{matrixDB}), we have to check that 
\begin{equation*}
H^{T}\left[ \omega _{K}\right] _{\mathcal{E}_{t}}H=\left[ \omega _{K}\right]
_{\mathcal{E}_{-t}}
\end{equation*}%
The left hand side equals 
\begin{equation*}
\left( 
\begin{array}{cc}
-D^{T}\rho R+R\rho D & \rho \\ 
-\rho & 0_{2}%
\end{array}%
\right)
\end{equation*}%
and 
\begin{equation*}
-D^{T}\rho R+R\rho D=\frac{2}{\bigskip a}\left( 1-\kappa ^{2}\right) \left( 
\begin{array}{cc}
0 & -b_{12} \\ 
b_{21} & 0%
\end{array}%
\right) ,
\end{equation*}%
which is the zero matrix since $\kappa =\pm 1$, as desired.
\end{proof}

\begin{proof}[Proof of Theorem \protect\ref{thm:symp}(b)]
Following the computations in the proof of part (a), we have 
\begin{equation*}
\left[ \omega _{\times }\right] _{\mathcal{E}_{t}}=\left( 
\begin{array}{cc}
j & 0_{2} \\ 
0_{2} & j%
\end{array}%
\right) ,
\end{equation*}%
where $j=\frac{1}{2}\left( 
\begin{array}{cc}
0 & 1 \\ 
-1 & 0%
\end{array}%
\right) $ and $\left[ \omega _{\times }\right] _{\mathcal{E}_{-t}}=\left[
\omega _{\times }\right] _{\mathcal{E}_{t}}$. Further computations yield 
\begin{equation}
H^{T}\left[ \omega _{\times }\right] _{\mathcal{E}_{t}}H=\left( 
\begin{array}{cc}
j & -(RjD)^{T} \\ 
RjD & j%
\end{array}%
\right) .  \label{HtOmegaH}
\end{equation}%
Now, $RjD=\frac{1}{b}\left( 
\begin{array}{cc}
-b_{21} & -b_{11}/s_{\kappa }\left( t\right) \\ 
s_{\kappa }\left( t\right) \kappa b_{22} & \kappa b_{12}%
\end{array}%
\right) $ and $b_{11}/s_{\kappa }(t)\neq 0$ since by the hypothesis $S$ is
quadratically convex. Therefore, $RjD\neq 0_{2}$ and the expression (\ref%
{HtOmegaH}) is not equal to $\left[ \omega _{\times }\right] _{\mathcal{E}%
_{-t}}$.
\end{proof}

We conclude this section with the following proposition, which relates
Theorem \ref{thm:symp} with plane hyperbolic outer billiards \cite{planeHOB}
and supports the fact that $\omega _{K}$ (in contrast with $\omega _{\times
} $) is the natural symplectic form in our context.

\begin{proposition}
\label{prop:kx} Let $H^{2}$ be a totally geodesic hyperbolic plane in
hyperbolic space.

\smallskip

a) Let $\nu $ be a unit normal vector field on $H^{2}$ and let $%
f:H^{2}\rightarrow \mathcal{G}_{-1}$,\ $f(p)=[\gamma _{\nu (p)}]$. Then $%
f^{\ast }\omega _{K}$ is the area form on $H^{2}$.

\smallskip

b) Let $S$ be a smooth, closed, quadratically convex surface in $H^{3}$
which is invariant by the reflection with respect to $H^{2}$. Then the outer
billiard map on $\mathcal{G}_{-1}$ associated with $S$ preserves the
oriented lines orthogonal to $H^{2}$ and induces in the obvious manner the
outer billiard map on $H^{2}$ determined by the closed strictly convex curve
with image $S\cap H^{2}$. This plane outer billiard preserves the area form
on $H^{2}$.
\end{proposition}

\begin{proof}
For part (a), let $p\in H^{2}$ and let $\left\{ z_{1},z_{2}\right\}$ be a
positively oriented (with respect to the orientation on $H^{2}$ determined
by $\nu$) orthonormal basis of~$T_{p}H^{2}$. For $i=1,2$, let $J_{i}$ be the
Jacobi field along $\gamma _{\nu ( p) }$ satisfying $J_{i}(0) =z_{i}$ and $%
J_{i}^{\prime }( 0) =0$. Using (\ref{omegaKplus}) and that $H^{2}$ is
totally geodesic we have that 
\begin{equation*}
\left( f^{\ast }\omega_{K}\right) _{p}( z_{1},z_{2}) =\omega _{K}(
df_{p}(z_{1}) ,df_{p}( z_{2}) ) =\omega _{K}(J_{1},J_{2}) =1.
\end{equation*}
Part (b) is an immediate consequence of Part (a) and Theorem \ref{thm:symp}.
\end{proof}

\section{Dynamics of the outer billiard map on $\mathcal{G}_{-1}$}\label{Dynamics}

Before proving Proposition \ref{NotParallel}, we comment on the Klein model
of hyperbolic space, that is, the open ball $\mathcal{H}$ centered at the
origin with radius $1$, where the trajectories of geodesics are the
intersections of Euclidean straight lines with the ball. The intersections
of $\mathcal{H}$ with Euclidean planes are totally geodesic hyperbolic
planes.

We recall the following well-known constructions on $\mathcal{H}$ (see for
instance Chapter 6 of \cite{BridsonHaefliger}). For an oriented line $\ell $
in $\mathcal{H}$ we call $\ell ^{+}$ and $\ell ^{-}$ its forward and
backward ideal end points in the two sphere $\partial \mathcal{H}$.

Let $\ell _{1}$ and $\ell _{2}$ be two coplanar oriented lines in $\mathcal{H%
}$ such that the corresponding extensions to Euclidean straight lines
intersect in the complement of the closure of $\mathcal{H}$. In particular, $%
\ell _{1}$ and $\ell _{2}$ do not intersect and are not asymptotic and hence
there exists the shortest segment joining them with respect to the
hyperbolic metric; we call it $s( \ell _{1},\ell _{2}) $.

The hyperbolic midpoint of $s( \ell _{1},\ell _{2}) $ is the intersection of
the Euclidean segments joining $\ell _{1}^{+}$ with $\ell _{2}^{-}$ and $%
\ell _{1}^{-}$ with $\ell _{2}^{+}$, or joining $\ell _{1}^{+} $ with $\ell
_{2}^{+}$ and $\ell _{1}^{-}$ with $\ell _{2}^{-}$, depending on the
orientation of the lines.

Suppose that $\ell _{1}$ and $\ell _{2}$ lie in the plane $P$, let $D = P
\cap \mathcal{H}$, and let $C$ be the boundary of $D$. We describe the
construction of the segment $s( \ell _{1},\ell _{2}) $ in the case when one
of the lines, say $\ell_1$, is a diameter in $D$. Let $p$ be the
intersection of the tangent lines to $C$ through the ideal end points $\ell
_{2}^{+}$ and $\ell _{2}^{-}$. Then $s( \ell _{1},\ell _{2}) $ is the
segment which joins $\ell _{1}$ and $\ell _{2}$ and is contained in the
straight line through $p$ perpendicular to $\ell_{1}$ (the point $p$ is
called the pole of $\ell _{2}$ in the plane $P$).

We recall the formula for the hyperbolic distance between a point in $%
\mathcal{H}$ and the midpoint of any chord containing it: Let $X,Y$ be two
distinct points in $S^{2}=\partial \mathcal{H}$ and let $C$ be the midpoint
of the segment joining $X$ and $Y$, that is, $C=\frac{1}{2}( X+Y) $. Then,
for any $t\in \left( 0,1\right) $,%
\begin{equation}
d( C,C+t\,\tfrac{Y-X}{2}) =\operatorname{arctanh} t,  \label{distanciaKlein}
\end{equation}
where $d$ is the hyperbolic distance in $\mathcal{H}$ (it is not difficult 
to deduce the expression from the second displayed formula of 
Proposition 6.2 in \cite{BridsonHaefliger}).

Since quadratic contact is invariant by diffeomorphisms, a quadratically
convex surface of $\mathbb{R}^{3}$ contained in $\mathcal{H}$ is also
quadratically convex with the hyperbolic metric. By abuse of notation, we
describe an oriented geodesic $\ell $ in $\mathcal{H}$ by the straight
Euclidean line $p+\mathbb{R}u$ containing $\ell $, with $p\in \mathcal{H}$
and $u$ a unit vector giving the orientation.

\begin{proof}[Proof of Proposition \protect\ref{NotParallel}]
We use the Klein model of hyperbolic space. Let $\ell =\mathbb{R}e_{3}$ and $%
\ell _{\theta }=\mathbb{R}( \sin \theta ~ e_2 + \cos \theta ~ e_3)$. We
construct a surface $S$ contained in the region $x\geq 0,y\geq 0$ of $%
\mathcal{H}$ whose associated outer billiard map $B$ satisfies $B^{3}( \ell
) =\ell _{\theta }$. We fix a real number $r$ in the interval $\left( \sin
\theta ,1\right) $. Let $\ell _{1}=re_{1}+\mathbb{R}e_{3}$ and $\ell
_{2}=re_{2}+\mathbb{R}e_{3}$, and let $p=\left( x_0,0,0\right) $ and $%
q=\left( 0,y_0,z_0\right) $ be the hyperbolic midpoints between $\ell $ and $%
\ell _{1}$ and between $\ell _{\theta }$ and $\ell _{2}$, respectively (see
Figure \ref{fig:Klein}).

\begin{figure}[ht!]
\centerline{
\includegraphics[totalheight=1.8in,angle=0]{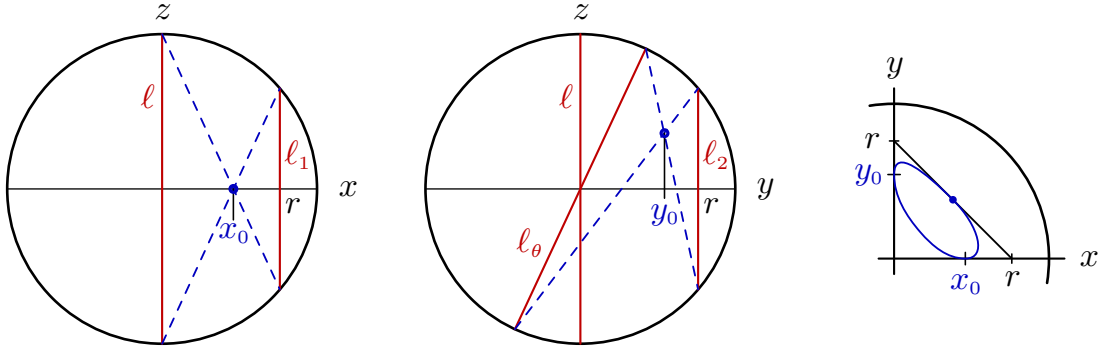}
}
\caption{Elements for the construction of $S$}
\label{fig:Klein}
\end{figure}

There exists a smooth, closed, quadratically convex surface $S$ contained in 
$\mathcal{H}$ and tangent to the vertical planes $y=0$,\ $y+x=r$ and $x=0$,
at the points $p,$ $\left( r/2,r/2,0\right) $ and $q$, respectively. In
fact, consider a quadratically convex compact surface $S^{\prime }$ tangent
to those planes at the points $p,$ $\left( r/2,r/2,0\right) $ and $\left(
0,y_{0},0\right) $, respectively, such that the absolute value of the height
function $\left. z\right\vert _{S^{\prime }}$ is bounded by $\varepsilon $
for some $\varepsilon >0$. Let $T$ be the unique affine transformation of $%
\mathbb{R}^{3}$ fixing the vertical plane through $p$ and $\left(
r/2,r/2,0\right) $ and sending $\left( 0,y_{0},0\right) $ to $q$. Then $%
S=T\left( S^{\prime }\right) $ satisfies the desired conditions (in particular, it
preserves the vertical planes and the quadratical contact), provided
that $\varepsilon $ is small enough.
By the properties of $S$ we have that $B(
\ell ) =\ell _{1}$, $B^{2}( \ell ) =\ell _{2}$ and $B^{3}( \ell ) =\ell
_{\theta }$.
\end{proof}


\begin{proof}[Proof of Proposition \protect\ref{holonomy}]
As in the proof of Proposition \ref{NotParallel}, we use the Klein model $%
\mathcal{H}$ for hyperbolic space. We write $\mathbb{R}^{3}=\mathbb{C}\times 
\mathbb{R}$. Given $0<a<\frac{1}{2}<$ $r_{o}<1$ and $h_{o}=\sqrt{1-r_{o}^{2}}
$, we consider the straight lines%
\begin{equation*}
\begin{array}{ll}
\gamma _{0}( t) =( t,0), & \gamma _{1}( t) =( \tfrac{i}{2}+t( 1-ai) ,0), \\ 
\gamma _{2}( t) =( \tfrac{i}{2}+t( 1-ai) ,h_{o}), & \gamma _{3}( t) =(
t,h_{o}) .%
\end{array}%
\end{equation*}

For $k=0,1,2,3$, let $\ell _{k}$ be the corresponding oriented geodesic in $%
\mathcal{H}$ and set $\ell _{4}=\ell _{0}$. Notice that $\ell_k$ and $\ell_{k+1}$ 
are coplanar for any $k=0,1,2,3$. We
call $P_{k}$ the hyperbolic plane containing $\ell _{k}$ and $\ell _{k+1}$.
Since $0<2a<1$, the lines $\ell_k$ and $\ell_{k+1}$ are disjoint and not asymptotic, and
so the shortest segment $\sigma _{k}$ joining them is well-defined.

We will show the existence of a smooth, closed, quadratically convex surface 
$S$ in $\mathcal{H}$ not intersecting $\ell _{0}$ such that the associated
billiard map $B$ satisfies $B^{k}( \ell _{0}) =\ell _{k}$ for $k=0,\dots ,4$
and its holonomy at $\ell _{0}$ is not trivial.

We make computations for general values of $r$ and $h=\sqrt{1-r^{2}}$, with $%
0<a<\frac{1}{2}<r\leq 1$, in order to deal simultaneously with $\ell _{0}$
and $\ell _{1}$ on the one hand (case $r=1$) and $\ell _{2}$ and $\ell _{3}$
on the other (case $r=r_{o}$), since the former lie in a disc of radius $1$
at height $0$ and the latter in a disc of radius $r_{o}$ at height $h_{o}$.

The end points of $\ell _{1}$ and $\ell _{2}$ are given by%
\begin{equation*}
\ell _{1}^{\varepsilon }=\left( z_{\varepsilon }( 1), 0\right) \text{\ \ \ \
\ \ and\ \ \ \ \ \ }\ell _{2}^{\varepsilon }=\left( z_{\varepsilon }(
r_{o}), h_{o}\right) \text{,}
\end{equation*}%
for $\varepsilon =\pm 1$, where $z_{\varepsilon }( r) =\frac{i}{2}%
+t_{\varepsilon }( r) \left( 1-ai\right) $, with $t_{-}( r) <t_{+}( r) $
being the solutions of the equation $t^{2}+\left( \frac{1}{2}-at\right)
^{2}=r^{2}$. Since $\ell _{1}$ and $\ell _{2}$ are parallel, the end points
of $\sigma _{1}$ are the respective midpoints, whose (common) component in $%
\mathbb{C}$ is 
\begin{equation*}
z_{o}=\tfrac{1}{2}\left( z_{+}( 1) +z_{-}( 1) \right) =\tfrac{1}{2}\left(
z_{+}( r_{o}) +z_{-}( r_{o}) \right) =\tfrac{1}{2\left( a^{2}+1\right) }%
\left( a+i\right) \text{.}
\end{equation*}%
Hence, $q_{+}( \ell _{1}) =\left( z_{o},0\right) $ and\ $q_{-}( \ell _{2})
=\left( z_{o},h_{o}\right) $. Similarly, $q_{-}( \ell _{0}) =\left(
0,0\right) $ and $q_{+}( \ell _{3}) =\left( 0,h_{o}\right) $.

\medskip

\begin{figure}[ht!]
\centerline{
\includegraphics[width=3.3in]
{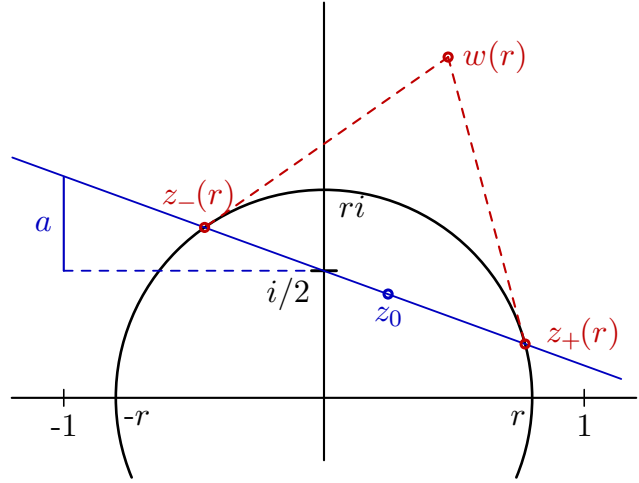}
}
\caption{Elements for the construction of $S$}
\label{fig:holonomy}
\end{figure}

Let $p_{1}=\left( w( 1), 0\right) $ and $p_{2}=\left( w( r_{o}),
h_{o}\right) $ be the poles of the line $\ell _{1}$ in the plane $\mathbb{C}%
\times \left\{ 0\right\} $ and of the line $\ell _{2}$ in the plane $\mathbb{%
C}\times \left\{ h_{o}\right\} $, respectively. That is, $w( r) $ is the
intersection of the lines tangent to the circle of radius $r$ in $\mathbb{C}$
at the points $z_{-}( r) $ and $z_{+}( r) $, in the horizontal plane at height $h$, 
for $h=0,h_o$  
 (see Figure \ref{fig:holonomy}).
Using the construction of the shortest segment joining two oriented lines,
the segment $\sigma _{2}$ is contained in the line perpendicular to $\ell
_{3}$ passing through $\left( w\left( r_{o}\right) ,h_{o}\right) $, that is,
the line $\left( \operatorname{Re}w\left( r_{o}\right) +\mathbb{R}i,h_{o}\right) $.
Putting $r=1$, we get that $\sigma _{0}$ is contained in the line $\left( 
\operatorname{Re}w\left( 1\right) +\mathbb{R}i,0\right) $.

We have that $w( r) =z_{+}( r) +s_{o}( r) iz_{+}( r) $, where $s_{o}$ is the
solution of the equation%
\begin{equation*}
z_{+}( r) +siz_{+}( r) =z_{-}( r) -siz_{-}( r).
\end{equation*}%
A straightforward computation yields $\operatorname{Re}w( r) =2ar^{2}$.

Now, computing the intersections of the remaining $\sigma _{k}$ with the
lines $\ell _{j}$, we obtain the rest of the $q_{\pm }( \ell _{j}) $: 
\begin{equation*}
\begin{array}{ll}
q_{+}( \ell _{0}) =\left( \operatorname{Re}w( 1) ,0\right) =2\left( a,0\right) \text{%
,} & q_{-}( \ell _{1}) =2\left( a+i\left( \frac{1}{4}-a^{2}\right) ,0\right)
\\ 
q_{+}( \ell _{2}) =\left( 2\left( ar_{o}+i\left( \frac{1}{4}%
-a^{2}r_{o}^{2}\right) \right) ,h_{o}\right) \text{,} & q_{-}( \ell _{3})
=\left( \operatorname{Re}w( r_{o}) ,h_{o}\right) =\left( 2ar_{o}^{2},h_{o}\right) 
\text{.}%
\end{array}%
\end{equation*}

As in Proposition \ref{NotParallel}, for each $a>0$ there exists a
quadratically convex surface $S_{a}$ in $\mathcal{H}$ tangent to the plane $%
P_{k}$ at the midpoint of $\sigma _{k}\,$for any $k=0,\dots ,3$. The
associated billiard map $B_{a}$ satisfies $\left( B_{a}\right) ^{4}( \ell
_{0}) =\ell _{0}$ and its holonomy at $\ell _{0}$ turns out to be 
\begin{equation*}
H( a) =\sum\nolimits_{k=0}^{3}\left( -1\right) ^{k}d( q_{+}( \ell _{k})
,q_{-}( \ell _{k}) ) .
\end{equation*}

Particularizing $r_{o}=h_{o}=1/\sqrt{2}$, using (\ref{distanciaKlein}) we
obtain that%
\begin{equation*}
H( a) =\text{arctanh}\left( 2a\right) -\text{arctanh}\left( a\sqrt{4a^{2}+3}%
\right) +\text{arctanh}\left( a\sqrt{2a^{2}+1}\right) -\text{arctanh}\left(
a\right) \text{.}
\end{equation*}

We compute $H( 0) =0$ and $H^{\prime }( 0) \neq 0$. Therefore for
sufficiently small $a>0$, the holonomy of $B_{a}$ at $\ell _{0}$ does not
vanish.
\end{proof}

\vspace{0.1cm}

\noindent Yamile Godoy\newline
\noindent FAMAF (Universidad Nacional de C\'ordoba) and CIEM (Conicet) \newline
\noindent Ciudad Universitaria, X5000HUA C\'{o}rdoba, Argentina\newline
\noindent yamile.godoy@unc.edu.ar

\vspace{0.5cm}

\noindent Michael Harrison\newline
\noindent Institute for Advanced Study\newline
\noindent 1 Einstein Drive\newline
\noindent Princeton, NJ 08540, US\newline
\noindent mah5044@gmail.com

\vspace{0.5cm}

\noindent Marcos Salvai\newline
\noindent FAMAF (Universidad Nacional de C\'ordoba) and CIEM (Conicet) \newline
\noindent Ciudad Universitaria, X5000HUA C\'{o}rdoba, Argentina \newline
\noindent marcos.salvai@unc.edu.ar


\begin{thebibliography}{99}
\bibitem{Alek} D. V. Alekseevsky, B. Guilfoyle, W. Klingenberg, \textsl{On
the geometry of spaces of oriented geodesics}, Ann. Glob. Anal. Geom. 40
(2011) 389--409.

\bibitem{Anciaux} H. Anciaux, \textsl{Spaces of geodesics of
pseudo-Riemannian space forms and normal congruences of hypersurfaces},
Trans. Amer. Math. Soc. 366 (2014), 2699--2718

\bibitem{besse} A. Besse, \textsl{Manifolds all of whose geodesics are closed%
}. Ergebnisse der Mathematik und ihre Grenzgebiete 93. Springer,
Berlin\,-\,New York (1978).

\bibitem{BridsonHaefliger} M. R. Bridson, A. Haefliger, \textsl{Metric
spaces of non-positive curvature}, Grundlehren der mathematischen
Wissenschaften 319. Springer, Berlin\,-\,New York (1999).

\bibitem{docarmo} M. do Carmo. \emph{Riemannian geometry}, Springer (1992).

\bibitem{doCarmoWarner} M. P. do Carmo, F. W. Warner, \textsl{Rigidity and
convexity of hypersurfaces in spheres}, J. Diff. Geom. 4 (1970) 133--144.

\bibitem{TabachMathIntell} F. Dogru, S. Tabachnikov, \textsl{Dual billiards}%
, Math. Intelligencer 27 (2005) 18--25.

\bibitem{GG} N. Georgiou, B. Guilfoyle, \textsl{On the space of oriented
geodesics of hyperbolic 3-space}, Rocky Mountain J. Math. 40 (2010)
1183--1219.

\bibitem{brendan} N. Georgiou, B. Guilfoyle, \textsl{Hopf hypersurfaces in
spaces of oriented geodesics}, J. Geom. 108 (2017) 1129--1135.

\bibitem{GW} H. Gluck, F. Warner, \textsl{Great circle fibrations of the
three-sphere}, Duke Math. J. 50 (1983) 107--132.

\bibitem{GodoySalvaiMag} Y. Godoy, M. Salvai, \textsl{The magnetic flow on
the manifold of oriented geodesics of a three dimensional space form}, Osaka
J. Math. 50 (2013) 749--763.

\bibitem{GShip} Y. Godoy, M. Salvai, \textsl{Global geodesic foliations of
the hyperbolic space}, Math. Z. 281 (2015) 43--54.

\bibitem{GodoySalvaiCali} Y. Godoy, M. Salvai, \textsl{Calibrated geodesic
foliations of hyperbolic space}, Proc. Amer. Math. Soc. 144 (2016) 359--367.

\bibitem{gk1} B. Guilfoyle and W. Klingenberg, \textsl{An indefinite
K\"ahler metric on the space of oriented lines}, J. London Math. Soc. 72
(2005) 497--509.
 
\bibitem{HarrisonMZ} M. Harrison. \textsl{Skew flat fibrations}, Math. Z. 282
(2016) 203--221.

\bibitem{HarrisonBLMS} M. Harrison. \textsl{Contact structures induced by skew 
fibrations of $\mathbb{R}^3$}, Bull. Lond. Math. Soc. 51 (2019) 
887--899.

\bibitem{HarrisonAGT} M. Harrison. \textsl{Fibrations of $\mathbb{R}^{3}$ by
oriented lines}, Algebr. Geom. Topol. 21 (2021) 2899--2928.

\bibitem{HarrisonT} M. Harrison. \textsl{Skew and sphere fibrations}, to appear in \emph{Trans. Amer. Math. Soc.}, arxiv:2203.16412 [math.GT].

\bibitem{Hitchin} N. J. Hitchin, \textsl{Monopoles and geodesics}, Commun.
Math. Phys. 83 (1982) 579--602.

\bibitem{FM} F. Morgan, \textsl{The exterior algebra $\Lambda^k{\mathbb{R}}%
^n $ and area minimization}, Linear Algebra Appl. 66 (1985) 1--28.

\bibitem{moser1} J. Moser, \textsl{Stable and random motions in dynamical
systems}. Ann. of Math. Stud. 77, Princeton (1973).

\bibitem{moser2} J. Moser, \textsl{Is the solar system stable?}, Math.
Intelligencer 1 (1978) 65--71.

\bibitem{salvaimm} M. Salvai, \textsl{On the geometry of the space of
oriented lines of Euclidean space}, Manuscripta Math. 118 (2005) 181--189.

\bibitem{Salvai2} M. Salvai, \textsl{On the geometry of the space of
oriented lines of the hyperbolic space}, Glasgow Math. J. 49 (2007) 357--366.

\bibitem{salvaiolf} M. Salvai, \textsl{Global fibrations of }$\mathbb{R}^{3}$
\textsl{by oriented lines}, Bull. London Math. Soc. 41 (2009) 155--163.

\bibitem{TabachnikovOuter} S. Tabachnikov, \textsl{Outer billiards}, Russian
Math. Surveys 48 (1993) 81--109.

\bibitem{TabachnikovBilliards} S. Tabachnikov, \textsl{Billiards}. Panoramas
et Synth\`{e}ses 1, Soci\'{e}t\'{e} Math\'{e}matique de France (1995).

\bibitem{Tabachnikov} S. Tabachnikov, \textsl{On the dual billiard problem},
Adv. Math. 115 (1995) 221--249.

\bibitem{TabachnikovPeriodic} S. Tabachnikov, \textsl{On three-periodic
trajectories of multi-dimensional dual billiards}, Algebr. Geom. Topol. 3
(2003) 993--1004.

\bibitem{planeHOB} S. Tabachnikov, \textsl{Dual billiards in the hyperbolic
plane}, Nonlinearity 15 (2002) 1051--1072.

\bibitem{bookT} S. Tabachnikov, \textsl{Geometry and billiards}. Student
Mathematical Library 30. American Mathematical Society, Providence RI (2005).
\end{thebibliography}
\end{document}